\documentclass[11pt]{amsart}
\usepackage{amssymb, amsmath, amsthm, amsfonts}
\usepackage{epsfig}
\usepackage{latexsym,amssymb}
\usepackage{MnSymbol}
\jot3mm
\usepackage{a4wide}
\usepackage{times}
\setlength{\textheight}{20cm} \textwidth 150mm
\newtheorem{theorem}{Theorem}[section]
\newtheorem{lema}{Lemma}[section]
\newtheorem{prop}{Proposition}[section]
\newtheorem{cor}{Corollary}[section]

\newtheorem{remark}{Remark}[section]

\def\abs#1{\left| #1\right|}

\def\s{\mathbb{S}}
\def\sfe{\mathbb{S}^{n-1}}
\def\R{\mathbb{R}}

\def\N{\mathbb{N}}

\def\K{\mathcal{K}}

\def\c{C}

\def\Sf{\mathfrak{S}}
\def\Mat{{\mathcal S}}
\def\F{\mathcal F}


\def\Sa{\mathrm{S}_{n-1}}
\def\Ss{\mathrm{S}_{2}}
\def\Saplane{\mathrm{S}_{1}}

\def\H{\mathcal{H}}

\newcommand{\meares}{\lefthalfcup}

\begin{document}
\title[Characterization of some mixed volumes]{A characterization of some mixed volumes\\
via the Brunn--Minkowski inequality}
\author[Andrea Colesanti, Daniel Hug, and Eugenia Saor\'{\i}n G\'omez]{Andrea Colesanti,
Daniel Hug, and Eugenia Saor\'{\i}n G\'omez}
\address{Dipartimento di Matematica ``U. Dini",
Viale Morgagni 67/A,
50134 Firenze, Italy}
 \email{colesant@math.unifi.it}
\address{Karlsruhe Institute of Technology (KIT), Department of Mathematics, 
D-76128 Karlsruhe, Germany}
\email{daniel.hug@kit.edu}
\address{Fakult\"at f\"ur Mathematik, Otto-von-Guericke Universit\"at Magdeburg, \newline
Universit\"atsplatz 2, D-39106 Magdeburg, Germany}
\email{eugenia.saorin@ovgu.de}
\subjclass[2010]{Primary: 52A20, 52A39; secondary: 52A40, 26D10}
\keywords{Convex body, Brunn--Minkowski theory,
Minkowski inequality, valuation, mixed volume, area measure,  variational calculus}
\date{\today}
\begin{abstract} We consider a functional $\mathcal F$ on the space of
convex bodies in $\R^n$ of the form
\begin{equation*}
{\mathcal F}(K)=\int_{\sfe} f(u)\,\Sa(K,du)\,,
\end{equation*}
where $f\in\c(\sfe)$ is a given continuous function on the unit sphere of $\R^n$, 
$K$ is a convex body in $\R^n$, $n\ge 3$, and $\Sa(K,\cdot)$ is the area
measure of $K$. We prove that $\F$ satisfies an inequality of
Brunn--Minkowski type if and only if $f$ is the support function of a convex body,
i.e., $\F$ is a mixed volume. As a consequence, we obtain a characterization
of translation invariant, continuous valuations which are homogeneous
of degree $n-1$ and satisfy a Brunn--Minkowski type inequality.
\end{abstract}
\maketitle

\noindent

\section{Introduction}

In this paper, we consider functionals $\mathcal F \ : \ \K^n\to\R$ on the space $\K^n$ of convex bodies
in Euclidean space $\R^n$, $n\ge 2$, of the form
\begin{equation}\label{0.1}
{\mathcal F}(K)=\int_{\sfe} f(u)\,\Sa(K,du)\,,
\end{equation}
where $f\in\c(\sfe)$ is a (given) continuous function on the unit sphere $\sfe$ of $\R^n$,
$K$ is a {\it convex body} (a non-empty, compact, convex subset
of $\R^n$) and $\Sa(K,\cdot)$ is the {\it area measure} of $K$ (we refer
to the next section for definitions). The dependence of the functional $\mathcal{F}$ on 
the given function $f$ will be  clear from the context in the sequel.

Basic properties of  area
measures imply that such a functional is always translation invariant, continuous with
respect to the Hausdorff metric and homogeneous of degree $n-1$ with respect to
dilatations.  The latter means that
$$
{\mathcal F}(s\,K)=s^{n-1}{\mathcal
F}(K)\,,\quad \,K\,\in\K^n\,,\, s\ge0\,.
$$
Moreover, a functional  $\mathcal F$ defined via \eqref{0.1} is a {\it valuation}. The valuation
property requires that
$$
\mathcal F(K_0\cup K_1)+\mathcal F(K_0\cap K_1)= \mathcal F(K_0)+\mathcal
F(K_1)
$$
holds for all $K_0\,,\,K_1\in\K^n$ such that $K_0\cup K_1\in\K^n$. 

Conversely, a result of McMullen
(\cite{McMullen80}) states that every continuous, translation invariant
valuation, homogeneous of degree $n-1$, is of the form (\ref{0.1}). 
If $f$ is the support function of some fixed convex body $L$, then
$\mathcal F$ is a {\it mixed volume}. More precisely, according to
common notation in the theory of convex bodies, we have
$$
\mathcal F(K)=n V(K,K,\dots,K,L)=n V(K[n-1],L)\,,\quad\,K\in\K^n;
$$
for the definition of mixed
volumes we refer to \cite[Chapter 5]{Schneider}. In this case,
$\mathcal F$ is non-negative and satisfies the
following inequality of Brunn--Minkowski type (see \cite[Theorem
6.4.3]{Schneider}):
\begin{equation}\label{0.3}
\F\left((1-t)K_0+tK_1\right)^{1/(n-1)}\ge
(1-t)\F(K_0)^{1/(n-1)}+t\F(K_1)^{1/(n-1)}\,,
\end{equation}
for all $K_0\,,\,K_1\in\K^n$ and  $t\in[0,1]$ (where the set
addition is the usual Minkowski addition). The exponent
appearing in this inequality is the reciprocal of the order of homogeneity of
$\mathcal F$.
Inequality (\ref{0.3}) is a consequence of the Aleksandrov--Fenchel
inequalities, which are among the deepest results in Convex Geometry.
It belongs to the same family of inequalities as the classical
Brunn--Minkowski inequality, which states that the volume raised to the
power $1/n$ is a concave functional on $\K^n$. For further information on this topic,
we refer the reader to the survey paper
\cite{Gardner}, which is entirely devoted to the Brunn--Minkowski inequality and
its connections to various other  branches of
mathematics.

More generally, we say that a functional $\mathcal{G}\ : \ \K^n\to\R_+$, which is positively homogeneous of
degree $\alpha$ (for simplicity, assume $\alpha\ne0$), satisfies an inequality of Brunn--Minkowski type,
if $\mathcal G^{1/\alpha}$ is concave on $\K^n$, that is
$$
\mathcal{G}\left((1-t)K_0+tK_1\right)^{1/\alpha}\ge
(1-t)\mathcal{G}(K_0)^{1/\alpha}+t\mathcal{G}(K_1)^{1/\alpha}\,,
$$
for all $K_0\,,\,K_1\in\K^n$ and  $t\in[0,1]$.
Examples of
functionals sharing these properties arise in quite different contexts:
they include a large
number of geometric functionals, as well as important examples coming from
different areas, like the Calculus of Variations (see, for instance,
\cite{Colesanti2005}). Understanding whether there are general conditions
such that a given functional satisfies a Brunn--Minkowski type inequality
is a fascinating problem, but maybe too ambitious. On the other hand, as a
first step in this direction, one could try to answer the question in some
restricted class of functionals, which is what we do in this paper by
focusing on functionals of the form (\ref{0.1}).

In dimension $n=2$, the inequality (\ref{0.3}) becomes an equality, and in fact
this is true for any choice of the function $f\in C(\s^1)$ (irrespective
of whether it is a support function or not). Indeed, due to the relation
$$
\Saplane(K_1+K_2,\cdot)=
\Saplane(K_1,\cdot)+\Saplane(K_2,\cdot)\,,\quad\, K_1,K_2\in\K^2\,,
$$
in the Euclidean plane condition \eqref{0.3} is satisfied with equality for every $f\in C(\s^1)$.
Hence the problem of characterizing $f$ via inequality (\ref{0.3}) is reasonable
for $n\ge3$ only. In addition to (\ref{0.3}), we also consider the weaker condition
\begin{equation}\label{0.3b}
\F((1-t)K_0+tK_1)\ge \min\{\F(K_0), \F(K_1)\}\,,
\end{equation}
for all $K_0,K_1\in\K^n$ and $t\in[0,1]$. Condition \eqref{0.3b} has
 the advantage of not requiring any {\it a priori} assumption on the sign of
$\mathcal F$. Obviously, if $\mathcal F\ge0$ on $\K^n$ is such
that (\ref{0.3}) holds, then $\mathcal F$  also satisfies (\ref{0.3b}).

The paper is devoted to proving that, {\it for $n\ge 3$,
(\ref{0.3b}) characterizes mixed volumes among functionals of type \eqref{0.1}.}

\begin{theorem}\label{gral statement}
Let $n\ge 3$ and $f\in\c(\sfe)$. Then the functional $\F$ defined
as in (\ref{0.1}) satisfies (\ref{0.3b}) if and only if $f$ is the support function
of a convex body.
\end{theorem}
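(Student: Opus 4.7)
The ``if'' direction is classical: if $f = h_L$ is the support function of some $L\in\K^n$, then $\F(K) = nV(K[n-1],L)$, and the Aleksandrov--Fenchel inequality yields the stronger Brunn--Minkowski-type inequality~(\ref{0.3}), which obviously implies~(\ref{0.3b}).

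For the ``only if'' direction, my plan is to exploit~(\ref{0.3b}) via a local variational argument at a smooth test body and reduce it to the classical pointwise criterion: a function $f\in C^2(\sfe)$ is the support function of a convex body if and only if $\nabla^2 f + f\,I$ is positive semi-definite on each tangent space of $\sfe$ (equivalently, the $1$-homogeneous extension of $f$ to $\R^n$ is convex). Fix a smooth strictly convex body $K_0$ with support function $h_0$ and a perturbation $\phi\in C^\infty(\sfe)$. For $|t|$ small, $h_t := h_0 + t\phi$ is again the support function of a smooth strictly convex body $K_t$, and this family is Minkowski-closed in the sense that $(1-s)K_{t_0} + s K_{t_1} = K_{(1-s)t_0 + s t_1}$. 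Thus~(\ref{0.3b}) restricted to this family says that
\[
G(t) := \F(K_t) = \int_{\sfe} f(u)\,\det\bigl(h_t(u)\,I + \nabla^2 h_t(u)\bigr)\,du
\]
is quasi-concave in $t$ on an open interval around $0$, so any critical point at $t=0$ cannot be a strict local minimum: $G'(0) = 0$ forces $G''(0) \leq 0$.

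Expanding the determinant and using the symmetry and self-adjointness of the mixed-discriminant operator, together with integration by parts on $\sfe$, the conditions $G'(0) = 0$ and $G''(0) \leq 0$ become, respectively, a linear constraint $\ell_{f,h_0}(\phi) = 0$ and a quadratic inequality $Q_{f,h_0}(\phi) \leq 0$, both explicit and continuous in $f$. To extract pointwise information I would vary $K_0$ and $\phi$ in a coordinated way: for any $u_0 \in \sfe$ and unit tangent vector $v$ at $u_0$, construct a rescaled bump perturbation $\phi_\varepsilon$ concentrated near $u_0$ with $\nabla^2\phi_\varepsilon(u_0)$ aligned along $v\otimes v$, and correct by a fixed smooth function so as to enforce the linear constraint $\ell_{f,h_0}(\phi_\varepsilon) = 0$. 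Under the appropriate rescaling, the leading-order contribution to $Q_{f,h_0}(\phi_\varepsilon)$ picks out, up to a positive factor, the quadratic form $\langle v,(\nabla^2 f + f\,I)(u_0)\,v\rangle$, which must therefore be non-negative. Since $u_0$ and $v$ are arbitrary, this yields the required pointwise positive semi-definiteness, and hence that $f$ is a support function.

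The main obstacle I expect is this localization. The operator appearing in $Q_{f,h_0}$ is a second-order elliptic operator whose principal symbol is a mixed cofactor of $N_f := fI + \nabla^2 f$ and $N_{h_0} := h_0 I + \nabla^2 h_0$, not $N_f$ itself, so isolating the quadratic form associated to $N_f$ in a specific direction may require varying $K_0$ (and not only $\phi$) together with a careful choice of correcting perturbation that respects the non-local constraint $\ell_{f,h_0}(\phi)=0$. For general $f \in \c(\sfe)$ (not necessarily of class $C^2$), the derived conditions can either be read in the sense of distributions---for continuous $f$ this is equivalent to convexity of the $1$-homogeneous extension, hence to $f$ being a support function---or one can first mollify $f$ on $\sfe$ before running the argument.
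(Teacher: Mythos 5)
Your overall strategy---compute the second variation of $\F$ along a linear family $h_t = h_0 + t\phi$, use (\ref{0.3b}) to get a sign on $G''(0)$, localize with concentrated test functions to extract positive semi-definiteness of $Q(f) = fI + \nabla^2 f$, and remove the $C^2$ hypothesis at the end---is indeed the same philosophy the paper follows. But the execution has several genuine gaps, and the obstacle you flag at the end is the one the paper actually resolves by a completely different structural decision that your proposal omits.

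First, the paper proceeds by \emph{induction on $n$}, with the entire variational machinery run only in dimension $3$. This is not cosmetic. For $n=3$, $\det(Q(h_0) + sQ(\phi))$ is quadratic in $s$, so $G''(0) = 2\int_{\s^2} f\det Q(\phi)\,d\H^2$ has \emph{no dependence on $h_0$} at all; after the Cheng--Yau integration by parts (Lemma \ref{inequ}), this becomes a Poincar\'e-type quadratic form in $\nabla\phi$ with coefficient matrix $C[Q(f)]$ alone. The ``mixed cofactor'' object you identify as the main obstacle simply does not appear. For $n>3$ it \emph{does} appear, and the paper's answer is not to ``vary $K_0$'' but to reduce dimension by the cylinder construction $Z(K,\lambda) = K + \{se : 0\le s\le\lambda\}$ and let $\lambda\to\infty$, which kills the top/bottom contributions and produces (\ref{0.3b}) for the restriction of $f$ to an equatorial sphere. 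Without this, your argument would have to confront a genuine mixed discriminant $D(Q(\phi),Q(\phi),Q(h_0),\dots,Q(h_0))$, and no straightforward choice of $K_0$ isolates $Q(f)$.

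Second, you cannot simply ``enforce the linear constraint'' $G'(0)=0$ by adding a fixed correcting term $c_\varepsilon\psi_0$ to $\phi_\varepsilon$ and keep control of the leading order: the correction introduces cross-terms $c_\varepsilon\int f\,D(Q(\phi_\varepsilon),Q(\psi_0))$ whose size is not obviously negligible, since the test functions with which the localization succeeds are of washboard type (small $L^\infty$ norm but $O(1)$ gradient and unbounded second derivatives), not bumps with a prescribed Hessian at a point---your proposed bump with $\nabla^2\phi_\varepsilon(u_0)\parallel v\otimes v$ does not match what the Poincar\'e-type inequality actually tests. The paper avoids the constraint entirely: it establishes concavity of $\sqrt{F}$ (after renormalization, or by a sign flip when $\mathcal F<0$ somewhere), which gives $2F(0)F''(0)-F'(0)^2\le 0$, and chooses $K_0 = C(\bar P,\bar\theta)+\eta B^3$, a slightly smoothed spherical cone. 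Then $h_0\equiv\eta$ on the relevant cap, so $F'(0)=\eta\int f\,\mathrm{trace}\,Q(\phi)=O(\eta)$, while $F(0)\to\mathcal F(C(\bar P,\bar\theta))$ which is shown to be strictly positive (Lemma \ref{techlemma3}, itself nontrivial and requiring the explicit computation of $\Ss(C(P,\theta),\cdot)$). Letting $\eta\to 0$ kills the $F'(0)^2$ term cleanly.

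Third, the mollification step cannot be applied blindly before running the argument: (\ref{0.3b}) for the functional built from $f$ does not imply (\ref{0.3b}) for the functional built from a mollification $f_k$. The paper first extracts the integral inequality $\int f\det Q(\phi)\,d\H^2\le 0$ for $\phi$ supported in a cap, and only then mollifies (by averaging over rotations), which works because $\det Q(\phi\circ\rho^{-1}) = \det Q(\phi)\circ\rho^{-1}$ (Lemma \ref{appendixlemma2}). Your alternative, reading the condition distributionally, is plausible in spirit but would need to be carried out carefully to recover local convexity and then invoke Tietze's theorem to globalize, as the paper does. Finally, the paper separately treats the degenerate cases where $\mathcal F$ vanishes on some smooth body or is negative somewhere, which your quasi-concavity remark begins to address but does not complete, since you also need the stronger concavity of $\sqrt F$ to run the $\eta\to0$ trick that replaces the constraint $G'(0)=0$.
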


According to the result of McMullen mentioned before, Theorem \ref{gral
statement} can be rewritten in terms of valuations as follows.

\begin{theorem}\label{gral statement val}
Let $n\ge3$ and let $\mathcal{V}$ be a valuation on
$\K^n$. Then $\mathcal{V}$ is continuous, translation invariant, homogeneous of degree $n-1$ and satisfies
inequality (\ref{0.3b}) if and only if there exists a convex body $L\in\K^n$ such that
$$
\mathcal{V}(K)=V(K[n-1],L)
$$
for all $K\in\K^n$.
\end{theorem}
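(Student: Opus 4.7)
My plan is to derive Theorem~\ref{gral statement val} as a direct corollary of Theorem~\ref{gral statement} together with McMullen's representation theorem recalled in the introduction. The substantive analytic work has already been carried out in Theorem~\ref{gral statement}, so the present proof is essentially a translation between the valuation formalism and the integral representation (\ref{0.1}).

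For the \emph{if} direction, I would assume $\mathcal{V}(K) = V(K[n-1], L)$ for a fixed $L \in \K^n$. Continuity with respect to the Hausdorff metric, translation invariance, positive homogeneity of degree $n-1$ in $K$, and the valuation property are all classical facts about mixed volumes recorded in \cite[Chapter~5]{Schneider}. Moreover, $\mathcal{V}(K) \ge 0$ for every $K \in \K^n$ since $L$ is a convex body, and the inequality (\ref{0.3})---of which (\ref{0.3b}) is a weakening---is a special case of the Aleksandrov--Fenchel inequalities, cf.\ \cite[Theorem~6.4.3]{Schneider}. Hence all the properties of $\mathcal{V}$ listed in the theorem follow at once.

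For the \emph{only if} direction, McMullen's theorem \cite{McMullen80} provides, under the stated hypotheses on $\mathcal{V}$, a function $f \in \c(\sfe)$ such that
$$
\mathcal{V}(K) = \int_{\sfe} f(u)\,\Sa(K, du), \qquad K \in \K^n,
$$
so that $\mathcal{V}$ coincides with the functional $\F$ in (\ref{0.1}) associated to $f$. The hypothesis (\ref{0.3b}) on $\mathcal{V}$ is now precisely the hypothesis on $\F$ in Theorem~\ref{gral statement}; applying that theorem produces a convex body $M \in \K^n$ with $f = h_M$. The integral representation of mixed volumes gives
$$
\int_{\sfe} h_M(u)\,\Sa(K, du) = n\, V(K[n-1], M),
$$
and the positive multilinearity of the mixed volume reshapes this as $V(K[n-1], nM)$. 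Setting $L := nM \in \K^n$ then yields $\mathcal{V}(K) = V(K[n-1], L)$, as desired. The only genuine obstacle in the characterization is therefore concentrated in Theorem~\ref{gral statement}; once that result and McMullen's representation are available, this corollary requires no additional work.
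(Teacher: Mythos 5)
Your proposal is correct and matches the paper's intended derivation: the paper explicitly frames Theorem~\ref{gral statement val} as a reformulation of Theorem~\ref{gral statement} via McMullen's representation theorem, and your two-sided argument (mixed-volume properties and Aleksandrov--Fenchel for the ``if'' direction; McMullen's theorem, Theorem~\ref{gral statement}, and rescaling $L := nM$ using multilinearity for the ``only if'' direction) is precisely that translation. No gaps.
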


The proof of the `only if' part of Theorem \ref{gral statement} proceeds by induction over the
dimension. In the inductive procedure, the most difficult part is the initial step,
i.e.~the proof in the three-dimensional case, while the reduction to lower
dimensions, carried out in Section \ref{sV}, is much easier.

The proof of the three-dimensional case is presented in Section \ref{s: 3 dim case}.
Roughly speaking, we compute the second variation of the functional $\F$, as
a quadratic form on test functions. The Brunn--Minkowski inequality
(\ref{0.3b}) implies that this is a negative semi-definite functional on a certain
class of test functions. By a further specialization in the choice of the
test functions, we obtain that the Hessian matrix of the homogeneous
extension of order one of $f$ is positive semi-definite, i.e.~$f$ is a
support function. This argument was initially inspired by some ideas
contained in \cite{Colesanti2008} and \cite{Colesanti-Saorin}, where the sign of
the second variation of functionals satisfying inequalities of Brunn--Minkowski type
was used to derive functional inequalities of Poincar\'e type.

Even though the idea upon which
the proof is based is not too involved, to adapt it to the general situation
in which $f$ is just continuous, required several technical steps (contained in
Section \ref{dim3dss3}). For this reason,  we outline in Section \ref{dim3dss2}
the proof of Theorem \ref{gral statement} in the three-dimensional
case under the additional assumption that $f$ is sufficiently smooth, symmetric
and positive.  This should help the reader to identify the essence of the general argument.

\section{Preliminaries}\label{sII}

We work in the $n$-dimensional Euclidean space $\R^n$,
$n\ge 2$, endowed with the usual scalar product $(\cdot,\cdot)$ and norm
$\abs{\abs{\cdot}}$. We  write $B^n$ for the closed unit ball and denote by $\s^{n-1}$ the unit sphere in $\R^n$.
The unit sphere is endowed with the relative topology inherited from $\R^n$.
In particular, this applies to the interior or the boundary of a subset of the unit sphere.

\subsection{Convex bodies}\label{sII.1} Our general reference for the theory of convex
bodies is the book \cite{Schneider} by Schneider, to which we refer for all properties
of convex bodies mentioned in this section without proof.

We denote by $\K^n$ the
family of non-empty, compact, convex subsets (i.e.~convex bodies) of $\R^n$. If
$K$ and $L$ are convex bodies, the Minkowski sum (or vector sum) of $K$ and $L$ is
$$
K+L=\{a+b\,|\, a\in K\,,\, b\in L\},
$$
which is again a convex body. The same holds for the dilatation of a convex body
$K$ by a non-negative real $s\ge 0$, that is
$$
s\,K=\{s\,a\,|\, a\in K\}\,.
$$
The {\it support function} $h_K$ of a convex body $K$ is denoted by
$h_K\,:\,\sfe\to\R$ and given by
$$
h_K(u)=\sup_{x\in K}(x,u),\quad u\in\s^{n-1}\,.
$$
We will sometimes write $h$ instead of $h_K$, if $K$ is clear from the context.
If $h$ is the support function of a convex body, then the 1-homogeneous extension
of $h$ to $\R^n$ is convex. Conversely, if $H\,:\,\R^n\to\R$ is a 1-homogeneous convex function,
then its restriction to $\sfe$ is the support function of a convex body.
For all $K,L\in \K^n$ and $s,r\ge0$, we have
$$
h_{sK+rL}=sh_K+rh_L\,.
$$

\medskip

As usual, $\H^{j}$ denotes the $j$-dimensional Hausdorff measure in
$\R^n$ (normalized as in \cite{Federer1969}; in particular, $\mathcal{H}^n$ equals $n$-dimensional
Lebesgue measure). 
 For $K\in\K^n$, let $\partial K$
denote the topological boundary of $K$. For
$x\in\partial K$, we write ${\rm Nor}(K,x)$ for the normal cone of $K$ at
$x$. This non-empty closed convex cone consists of all outer normal vectors to supporting
half-spaces of $K$ passing through $x$.
Then we put ${\rm nor}(K,x):={\rm Nor}(K,x)\cap\s^{n-1}$.
For  $\omega\subseteq\sfe$, let
$$
\tau(K,\omega):=\{x\in\partial K\,:\,{\rm
nor}(K,x)\cap\omega\ne\emptyset\}
$$
be the set of all points
$x\in \partial K$ such that there exists an outer unit normal vector to
$K$ at $x$ contained in $\omega$. If $K$ has non-empty interior and
$\omega$ is a Borel subset of $\sfe$, then $\tau(K,\omega)$ is
$\H^{n-1}$-measurable (see \cite[\S 2.2]{Schneider}). In this case, the (surface) area
measure of $K$  can be defined by
$$
\Sa(K,\omega):=\H^{n-1}(\tau(K,\omega))
$$
for every Borel set $\omega\subseteq\sfe$.

\bigskip

Let $\langle\cdot,\cdot\rangle$ denote the Riemannian metric of $\sfe$ induced from $\R^n$, and let 
$\nabla$ denote the Levi-Civita connection. In the following, we consider local orthonormal frames of vector fields 
on $\s^{n-1}$, generically denoted by 
$\{E_1,\dots,E_{n-1}\}$. For a function $f\in\c^2(\sfe)$, we then write $f_i$ and
$f_{ij}$,
respectively, for the first and second  covariant derivatives
of $f$  with respect to $\{E_1,\dots,E_{n-1}\}$, where 
$i,j\in\{1,\dots,n-1\}$.  As usual,  $\delta_{ij}$ is the Kronecker symbol, hence $\delta_{ij}=\langle E_i,E_j\rangle$ 
for $i,j\in\{1,\dots,n-1\}$. Observe that  $f_i=\nabla_{E_i}f =E_i(f)$. To provide an invariant definition 
for some of the relevant notions to be considered subsequently, we recall that the gradient $\nabla f$ of $f$ is the uniquely 
determined vector field on $\sfe$ such that $\langle \nabla f,X\rangle=X(f)$,  for all vector fields $X$ on $\sfe$. 
 The Hessian form $\nabla^2 f$ is then defined as the field of bilinear 
forms on the tangent spaces $T_u\sfe$, $u\in\sfe$, of the unit sphere, which is determined by $\nabla^2f(X,Y)=\langle \nabla_X(\nabla f),
Y\rangle$, for all vector fields $X,Y$ on $\sfe$. For  $u\in \sfe$, the Hessian $\nabla^2f_u$ is a symmetric bilinear form 
on $T_u\sfe$ and $\nabla^2f(E_i,E_j)=f_{ij}$. The symmetry  of the matrix $\left(f_{ij}\right)_{i,j=1}^{n-1}$ will be crucial 
in the following. In particular, it ensures the existence of $n-1$ real eigenvalues, which are positive (non-negative) if and only if 
this matrix is positive definite (semi-definite). The symmetry is used implicitly, for instance, in the proof of Lemma  
\ref{lmIII.1}, and thus it is also essential for the subsequent lemmas. Note, however, that the third covariant 
derivatives are not completely symmetric (for $f\in C^3(\sfe)$). 
Using the Riemannian metric, we can identify $\nabla^2f$ with a field of 
symmetric linear maps of the tangent spaces of $\sfe$.

For $\phi\in \c^2(\s^{n-1})$,
$u\in\sfe$, and $i,j=1,\dots,n-1$, we put
$$
q_{ij}(\phi,u):=\phi_{ij}(u)+\delta_{ij}\phi(u)\,,
$$
where the covariant derivatives are computed
with respect to a local orthonormal frame (of vector fields),
and
$$
Q(\phi,u):=(q_{ij}(\phi,u))_{i,j=1}^{n-1}\,.
$$
All relevant quantities and conditions will be independent of
the particular choice of a local orthonormal frame in the following.
For the sake of brevity, we sometimes omit the variable $u$ and simply write
$q_{ij}(\phi)$ or $Q(\phi)$.

A convex body $K\in\K^n$ is said to be of class $\c^2_+$, if $\partial K$ is of class
$\c^2$ and the Gauss curvature is strictly positive at each point of
$\partial K$. If $K$ is of class $C^2_+$, then the Gauss map
$\nu_K\,:\,\partial K\to\sfe$, assigning to each point $x\in\partial K$
the outer unit normal to $\partial K$ at $x$, is a diffeomorphism between
$\partial K$ and $\sfe$. Moreover, the support function $h=h_K$ of $K$
belongs to $\c^2(\s^{n-1})$, and the $(n-1)\times(n-1)$ matrix
$Q(h,u)$ is positive definite for every $u\in\s^{n-1}$.  Conversely, if
$h\in\c^2(\sfe)$ is such that $Q(h,u)$ is positive definite (as usual, we then write $Q(h,u)>0$),
then $h$ is the support function of a (uniquely determined) convex body of
class $C^2_+$. In the following,  we consider the class of functions
$$
\Sf:=\left\{h\in\c^2(\s^{n-1}): Q(h,u)>0\;\mbox{
for every $u\in\s^{n-1}$}\right\},
$$
consisting of support functions of convex bodies of class $\c^2_+$ (cf.~\cite[\S 2.5]{Schneider}).

For $K\in\K^n$ of class $C^2_+$, the area measure of $K$ admits the
representation
\begin{equation}\label{I.0a}
\Sa(K,\omega)=\int_\omega\det(Q(h,u))\,\H^{n-1}(du)
\end{equation}
for every $\H^{n-1}$-measurable set $\omega\subseteq\sfe$. 

\begin{remark}\label{lateremark}
The representation \eqref{I.0a} 
is still valid for a convex body $K$ with support function $h\in C^2(\Omega)$ and
any measurable set  $\omega\subseteq\Omega$, where $\Omega\subseteq\sfe$ is open. This
follows by an application of the coarea formula to the differentiable map $\Omega\to\tau(K,\Omega)$,
$u\mapsto \text{grad } h(u)$, where $\text{grad } h$ is the Euclidean gradient of $h$. To see this, observe 
that the Jacobian of this map is  $Q(h)$ and  $\mathcal{H}^{n-1}$-almost all boundary points of $K$
have a unique exterior unit normal.
\end{remark}

\subsection{The cofactor matrix and a Lemma of Cheng and Yau}
\label{s:symm funct}
Let  $A=(a_{ij})_{i,j=1}^k$, $k\in\N$, be a real $k\times k$ matrix.
The determinant of  $A$ can be considered as a real-valued, polynomial
function of the entries $a_{ij}$. For $i,j=1,\dots,k$, we then  define
$$
c_{ij}[A]:=\frac{\partial\det}{\partial a_{ij}}(A)\,,
$$
and hence we can describe the cofactor matrix $C[A]$ of $A$ as
$$
C[A]=(c_{ij}[A])\,.
$$
In the following, we will mainly consider symmetric matrices. The set of
real, symmetric $k\times k$ matrices is denoted by $\Mat_k$. It is easy to see that if $A\in \Mat_k$,
then also $C[A]\in\Mat_k$.

\begin{remark}\label{cof 2x2 matrix}{\rm Consider  $A\in \Mat_2$ given by
\[
A=\left(
\begin{array}{cc}
a & b\\
b & c
\end{array}
\right)\,.
\]
Then the cofactor matrix of $A$ is
\[
C[A]=\left(\begin{array}{rr}
c & -b\\
-b & a
\end{array}
\right)\,.
\]
In particular, for real $2\times 2$ matrices $A,B$ we have the  useful linearity property
$$
C[A+B]=C[A]+C[B]\,.
$$
}
\end{remark}

In the next remark, we summarize some further properties of the cofactor matrix that will be
used later on.

\begin{remark}\label{propII.1}
\begin{itemize}
\item[{\rm (i)}] If $A$ is a real $k\times k$ matrix, then
$$
\det(A)=\frac{1}{k}\sum_{i,j=1}^k c_{ij}[A]a_{ij}\,.
$$
\item[{\rm (ii)}] Let $A\in\Mat_2$  be given. Then $C[A]$ is positive
\mbox{(semi-)}definite if and only if $A$ itself is positive \mbox{(semi-)}definite.
\end{itemize}
\end{remark}

A particularly useful feature of a matrix of type $C[Q(\phi)]$ is that 
for each row, that is, for fixed $i\in\{1,\ldots,n-1\}$, the sum of the covariant 
derivatives  $\left(c_{ij}[Q(h)]\right)_j$, for  $j=1,\ldots,n-1$, is zero. 
This fact was first observed and used by Cheng and Yau
\cite[(4.3)]{Cheng-Yau}. (See Lemma 1 in \cite{Colesanti-Saorin} for an extension. Relation 
(4.11) in \cite{Cheng-Yau} is also covered by Proposition 4,  page 
 5-8, and Lemma 18, page 7-45, in \cite{Spivak}.)

\begin{lema}\label{lmIII.1} Let $h\in\c^3(\sfe)$. Let $\{E_1,\ldots,E_{n-1}\}$ be
a local orthonormal frame of vector fields on $\sfe$. Then, for
$i\in \{1,\dots,n-1\}$, in the domain of the frame  we have
$$
\sum_{j=1}^{n-1} \left(
c_{ij}[Q(h)]\right)_j=0\,.
$$
\end{lema}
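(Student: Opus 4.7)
The plan is to establish the identity via two ingredients: a \emph{Codazzi-type symmetry} of the third covariant derivatives encoded in $Q(h)$, and a \emph{combinatorial antisymmetry} of the cofactor written through the generalized Kronecker delta. Both ingredients exploit the very particular structure $q_{ij}=h_{ij}+\delta_{ij}h$ of the matrix $Q(h)$ on the unit sphere.

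First, I would show that the $(0,3)$-tensor $\nabla Q(h)$ is totally symmetric; in particular $q_{ij,k}=q_{ik,j}$ for all $i,j,k$. Since $q_{ij}=h_{ij}+\delta_{ij}h$, the Kronecker-symbol part contributes $\delta_{ij}h_k-\delta_{ik}h_j$ to $q_{ij,k}-q_{ik,j}$, so the claim reduces to
\[
h_{ij,k}-h_{ik,j}=\delta_{ik}h_j-\delta_{ij}h_k.
\]
This is exactly what the Ricci identity for the one-form $dh$ yields on $\sfe$, using that the Riemann curvature tensor of the unit sphere has the explicit form $R(X,Y)Z=\langle Y,Z\rangle X-\langle X,Z\rangle Y$. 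The relation is the same Codazzi equation that the second fundamental form of a smooth hypersurface satisfies, which is natural because $Q(h)$ plays the role of the reverse Weingarten map of the convex body with support function $h$ (in the $\c_+^2$ case).

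Second, I would expand the cofactor through the generalized Kronecker delta,
\[
c_{ij}[Q(h)]=\frac{1}{(n-2)!}\,\delta^{i\,i_2\cdots i_{n-1}}_{j\,j_2\cdots j_{n-1}}\,q_{i_2 j_2}\cdots q_{i_{n-1}j_{n-1}},
\]
with summation over repeated indices understood. To avoid Christoffel clutter, I would work at a fixed point $u\in\sfe$ in a local frame that is geodesic at $u$, so that at $u$ the directional derivatives $E_j(q_{pq})$ coincide with the covariant derivatives $q_{pq,j}$. Differentiating in $E_j$ and summing, all $n-2$ identical contributions combine, by the symmetry of the Kronecker delta in exchanging pairs, into
\[
\sum_{j=1}^{n-1}\bigl(c_{ij}[Q(h)]\bigr)_j=\frac{1}{(n-3)!}\,\delta^{i\,i_2\cdots i_{n-1}}_{j\,j_2\cdots j_{n-1}}\,q_{i_2 j_2,j}\,q_{i_3 j_3}\cdots q_{i_{n-1}j_{n-1}}.
\]
Swapping the dummy lower indices $j\leftrightarrow j_2$ in the Kronecker delta introduces a factor $-1$; after relabelling the dummies and applying the Codazzi identity $q_{i_2 j,j_2}=q_{i_2 j_2,j}$ from the first step, the right-hand side becomes its own negative and must therefore vanish.

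The hardest point is the first step: the Ricci identity and the curvature tensor must be used with consistent sign conventions so that the commutator $h_{ij,k}-h_{ik,j}$ cancels the $\delta$-terms produced by the $\delta_{ij}h$ summand of $q_{ij}$. The paper itself warns that $h_{ijk}$ is \emph{not} fully symmetric, so one must be clear that total symmetry is restored only after the $\delta_{ij}h$ correction is included — it is precisely this correction that encodes the curvature of $\sfe$. Once the Codazzi relation is verified, the combinatorial cancellation in the second step is routine.
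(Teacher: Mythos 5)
Your proof is correct: the key step is the Codazzi-type identity $q_{ij,k}=q_{ik,j}$, obtained from the Ricci identity for $dh$ on the round sphere (where the curvature term exactly cancels the contribution of the $\delta_{ij}h$ summand), combined with the antisymmetry of the cofactor expressed through the generalized Kronecker delta. The paper itself gives no proof of Lemma~\ref{lmIII.1}---it cites Cheng--Yau and Spivak---and the argument you present is essentially the standard one appearing in those references.
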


Let $h,\psi,\phi\in\c^3(\s^{n-1})$ be given. We then define a vector field $V$ on $\sfe$ by
$$
V=\sum_{i,j=1}^{n-1}\phi\,\psi_i\, c_{ij}[Q(h)]\, E_j\,,
$$
where $\{E_1,\ldots,E_{n-1}\}$ is a local orthonormal frame of vector fields.
Since the right-hand side  is independent of the choice of the orthonormal frame  (which
can be easily checked by a direct calculation), the vector field is globally defined.
Using Lemma \ref{lmIII.1}, we get for the divergence of this vector field that
\begin{align*}
\text{div }V&=\sum_{j=1}^{n-1}\left(\sum_{i=1}^{n-1}\phi\,\psi_i\, c_{ij}[Q(h)]\right)_j\\
&=\sum_{i,j=1}^{n-1}\phi_j\psi_i c_{ij}[Q(h)] +\phi\,\sum_{i,j=1}^{n-1}\psi_{ij} c_{ij}[Q(h)].
\end{align*}
Note that both summands on the right-hand side are independent of the choice of an 
orthonormal frame of vector fields (again
this can easily be checked).
The following lemma is  now an immediate consequence of the
divergence theorem on $\sfe$, applied to the vector field $V$, and a subsequent approximation argument.

\begin{lema}\label{l:interch int}
Let $h,\psi,\phi\in\c^2(\s^{n-1})$. Let $c_{ij}$
denote the entries of the matrix $C[Q(h)]$. Then, for
$i\in\{1,\dots,n-1\}$, we have
\begin{align*}
\int_{\s^{n-1}}\psi\sum_{i,j=1}^{n-1}\phi_{ij}\,c_{ij}\,d\H^{n-1}
&=-\int_{\s^{n-1}}\sum_{i,j=1}^{n-1}\phi_j\psi_i\,c_{ij}\,d\H^{n-1}\,\\
&=\int_{\s^{n-1}}\phi\sum_{i,j=1}^{n-1}\psi_{ij}\,c_{ij}\,d\H^{n-1}\,.
\end{align*}
\end{lema}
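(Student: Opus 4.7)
The plan is to recognize the identity as an integration by parts on the closed manifold $\sfe$, exploiting the Cheng--Yau identity (Lemma \ref{lmIII.1}) to cancel the terms in which $c_{ij}[Q(h)]$ itself gets differentiated. The vector field $V$ and the formula for its divergence have already been prepared in the paragraph preceding the statement, so the residual work is to invoke the divergence theorem, run a symmetry interchange to obtain the first equality, and relax the smoothness of the data from $C^3$ to $C^2$.

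First I would assume $h,\phi,\psi\in C^3(\sfe)$. Since $\sfe$ is compact and without boundary, the divergence theorem yields $\int_{\sfe}\text{div}\,V\,d\H^{n-1}=0$. Substituting the expression
$$
\text{div}\,V=\sum_{i,j=1}^{n-1}\phi_j\psi_i\,c_{ij}+\phi\sum_{i,j=1}^{n-1}\psi_{ij}\,c_{ij}
$$
obtained just above the statement gives at once the second of the two claimed equalities. To produce the first equality, I would repeat the same construction with the roles of $\phi$ and $\psi$ interchanged, applying the divergence theorem to the vector field $\widetilde V=\sum_{i,j}\psi\,\phi_i\,c_{ij}[Q(h)]E_j$; this yields
$$
-\int_{\sfe}\sum_{i,j=1}^{n-1}\psi_j\phi_i\,c_{ij}\,d\H^{n-1}=\int_{\sfe}\psi\sum_{i,j=1}^{n-1}\phi_{ij}\,c_{ij}\,d\H^{n-1}.
$$
Because $Q(h)\in\Mat_{n-1}$, the cofactor matrix $C[Q(h)]$ is symmetric, so $c_{ij}=c_{ji}$, and a relabeling of summation indices gives $\sum_{i,j}\psi_j\phi_i\,c_{ij}=\sum_{i,j}\phi_j\psi_i\,c_{ij}$, from which the first equality follows.

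The remaining step is to drop the $C^3$ hypothesis and allow $h,\phi,\psi\in C^2(\sfe)$. I would approximate each of $h$, $\phi$, $\psi$ in the $C^2$ topology by sequences of smooth functions on $\sfe$, for instance by mollification or via a truncated expansion into spherical harmonics. The entries of $C[Q(h)]$ are polynomial functions of the entries of $Q(h)$, so they depend continuously on $h$ in the $C^2$ norm; similarly $\phi_i,\psi_j,\phi_{ij},\psi_{ij}$ depend continuously on $\phi,\psi$ in the $C^2$ norm. Consequently both sides of each equality are continuous under $C^2$ convergence of the three arguments, and the asserted identities extend to $C^2$ data by passage to the limit. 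I do not expect any serious obstacle: the only delicate point is precisely this transition from $C^3$ to $C^2$ regularity of $h$, which is why Lemma \ref{lmIII.1} is stated in $C^3$ while the present lemma is formulated in $C^2$.
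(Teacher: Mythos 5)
Your proof is correct and follows essentially the same route as the paper: the paper's argument is exactly the one you spell out, namely applying the divergence theorem to the already-constructed vector field $V$ to obtain the equality between the last two integrals, obtaining the first equality by the symmetric construction together with the symmetry $c_{ij}=c_{ji}$, and then relaxing from $C^3$ to $C^2$ by a density/continuity argument. The paper states this only in passing (``the following lemma is now an immediate consequence of the divergence theorem on $\sfe$, applied to the vector field $V$, and a subsequent approximation argument''); you have simply made the implicit steps explicit.
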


\begin{remark}\label{propII.neu}
{\rm The preceding lemma can also be derived by working with $1$-homogeneous extensions to $\R^n$ of
functions on $\sfe$, by establishing  a fact  analogous  to Lemma \ref{lmIII.1} in this setting, and
by applying the divergence theorem to a spherical shell. }
\end{remark}

\medskip

The following consequence of Remark \ref{propII.1} (i) and
Lemma \ref{l:interch int}
in the case $n=3$ will be needed subsequently.
Let $f,\phi\in\c^2(\s^{2})$. Then we have
\begin{align*}
&2\cdot \int_{\s^{2}}f(u)\det(Q(\phi,u))\, \mathcal{H}^2(du)\\
&=\int_{\s^{2}}f(u)\sum_{i,j=1}^2c_{ij}[Q(\phi,u)]q_{ij}(\phi,u)\, \mathcal{H}^2(du)\\
&=\int_{\s^{2}}f(u)\sum_{i,j=1}^2c_{ij}[Q(\phi,u)]\phi_{ij}(u)\, \mathcal{H}^2(du)\\
&\qquad+
\int_{\s^{2}}f(u)\phi(u)(\phi_{11}(u)+\phi_{22}(u)+2\phi(u))\, \mathcal{H}^2(du)\\
&=\int_{\s^{2}}\phi(u)\sum_{i,j=1}^2c_{ij}[Q(\phi,u)]f_{ij}(u)\, \mathcal{H}^2(du)\\
&\qquad +
\int_{\s^{2}}2f(u)\phi(u)^2 +f(u)\phi(u)(\phi_{11}(u)+\phi_{22}(u))\, \mathcal{H}^2(du)\\
&=\int_{\s^{2}}\phi(u)^2 [f_{11}(u)+f_{22}(u)+2f(u)]\, \mathcal{H}^2(du)\\
&\qquad + \int_{\s^{2}}\phi(u)\sum_{i,j=1}^2c_{ij}[Q(f,u)]\phi_{ij}(u)\, \mathcal{H}^2(du).
\end{align*}
By another application of Lemma \ref{l:interch int}, we obtain the next lemma which will
play a crucial role in the sequel.

\begin{lema}\label{inequ}
Let $f,\phi\in\c^2(\s^{2})$. Then
\begin{align*}
&2\cdot \int_{\s^{2}}f(u)\det(Q(\phi,u))\, \mathcal{H}^2(du)\\
&\qquad =\int_{\s^{2}}\phi(u)^2 \,\text{\rm trace}(Q(f,u))\, \mathcal{H}^2(du)
 - \int_{\s^{2}}\sum_{i,j=1}^2c_{ij}[Q(f,u)]\phi_{i}(u)\phi_{j}(u)\, \mathcal{H}^2(du).
\end{align*}
\end{lema}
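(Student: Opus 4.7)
My plan is to start from the identity already produced by the chain of equalities displayed immediately before the statement of Lemma~\ref{inequ}, namely
\begin{equation*}
2\int_{\s^2}f(u)\det(Q(\phi,u))\,\H^2(du) = \int_{\s^2}\phi(u)^2\bigl(f_{11}(u)+f_{22}(u)+2f(u)\bigr)\,\H^2(du) + \int_{\s^2}\phi(u)\sum_{i,j=1}^2 c_{ij}[Q(f,u)]\phi_{ij}(u)\,\H^2(du),
\end{equation*}
and to rewrite each of the two terms on the right so that it matches the corresponding term in the lemma. The first one is immediate: by the definition $q_{ii}(f)=f_{ii}+f$ we have $f_{11}+f_{22}+2f=q_{11}(f)+q_{22}(f)=\text{\rm trace}(Q(f))$, so the first summand already has the required form.

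For the second summand, I would invoke Lemma~\ref{l:interch int} one further time, with $h=f$ and with both of the test functions called $\psi$ and $\phi$ in that lemma taken equal to our function $\phi$. The first equality of that lemma then reads
\[
\int_{\s^2}\phi\sum_{i,j=1}^2\phi_{ij}\,c_{ij}[Q(f)]\,d\H^2 = -\int_{\s^2}\sum_{i,j=1}^2\phi_i\phi_j\,c_{ij}[Q(f)]\,d\H^2,
\]
and substituting this into the previous display yields exactly the identity stated in Lemma~\ref{inequ}.

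I do not foresee any substantive obstacle. The regularity assumption $f,\phi\in C^2(\s^2)$ is precisely what Lemma~\ref{l:interch int} requires, so no additional approximation argument is needed on my part. The symmetry of $Q(f)$, inherited from the symmetry of the Hessian $(f_{ij})$, makes $C[Q(f)]$ symmetric and thus guarantees that the quadratic form $\sum c_{ij}[Q(f)]\phi_i\phi_j$ is unambiguously defined. The only bookkeeping step worth isolating explicitly is the identification of the trace just noted; everything else is routine substitution.
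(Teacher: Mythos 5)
Your proof is correct and is exactly the paper's own argument: starting from the displayed chain of equalities preceding the lemma, identifying $f_{11}+f_{22}+2f=\operatorname{trace}(Q(f))$, and applying Lemma~\ref{l:interch int} once more with $h=f$ and both test functions equal to $\phi$ to convert $\int\phi\sum c_{ij}\phi_{ij}$ into $-\int\sum c_{ij}\phi_i\phi_j$. Nothing more is needed.
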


\section{The $3$-dimensional case}\label{s: 3 dim case}
In this section, we prove the `only if' part of Theorem \ref{th dim 3}, which is the special case $n=3$ of Theorem
\ref{gral statement}. This also establishes
the initial step of the induction, which will be completed in Section \ref{sV}.

\begin{theorem}\label{th dim 3}
Let $f\in\c(\s^2)$ and let $\mathcal F\,:\,\K^3\to\R$ be defined by
$$
\F(K)=\int_{\s^2}f(u)\,\Ss(K,du)\,,\quad K\in\K^3\,.
$$
Then $\F$ satisfies
\begin{equation}\label{0.3bb}
\F((1-t)K_0+tK_1)\ge \min\{\F(K_0), \F(K_1)\}\,,
\end{equation}
for all $K_0,K_1\in\K^3$ and $t\in[0,1]$, if and only if 
$f$ is the support function of a convex body $L\in\K^3$.
\end{theorem}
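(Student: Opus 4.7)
My plan is to assume $\F$ satisfies (\ref{0.3bb}) and deduce that $f$ must be a support function; the converse is classical, since for $f=h_L$ the functional $\F(K)=3V(K,K,L)$ satisfies the Brunn--Minkowski inequality for mixed volumes. I first present the argument under the additional hypothesis $f\in\c^2(\s^2)$ with $f>0$---the case outlined in Section~\ref{dim3dss2}---in which the essential mechanism appears cleanly; reducing the general continuous case to this setting is the main obstacle and is addressed last.

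\emph{Step 1: variational inequality around the unit ball.} I take $B^3$ as the reference body, so that $h_{B^3}\equiv 1$, $Q(h_{B^3})\equiv I$, and $\F(B^3)=\int_{\s^2}f\,\H^2(du)$. For $\phi\in\c^2(\s^2)$ and $\epsilon>0$ small enough that $I\pm\epsilon Q(\phi)>0$ on $\s^2$, the functions $1\pm\epsilon\phi$ belong to $\Sf$ and are the support functions of $C^2_+$ convex bodies $K_\epsilon^\pm$ whose Minkowski midpoint is $B^3$. Using the two-dimensional identity $\det(I+A)=1+\tr(A)+\det(A)$, I expand
$$
\F(K_\epsilon^\pm)=\int_{\s^2}f\,\H^2(du)\;\pm\;\epsilon\,\alpha(\phi)\;+\;\epsilon^2\,\beta(\phi),
$$
with $\alpha(\phi):=\int_{\s^2}f\,\tr(Q(\phi))\,\H^2(du)$ and $\beta(\phi):=\int_{\s^2}f\,\det Q(\phi)\,\H^2(du)$. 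Feeding this into (\ref{0.3bb}) with $t=1/2$ yields $\epsilon^2\beta(\phi)\le\epsilon|\alpha(\phi)|$; specialising to $\phi$ with $\alpha(\phi)=0$ and sending $\epsilon\to 0^+$ gives the conditional inequality
$$
(\star)\quad \int_{\s^2}f\,\det Q(\phi)\,\H^2(du)\le 0\quad\text{whenever }\int_{\s^2}f\,\tr(Q(\phi))\,\H^2(du)=0,\ \phi\in\c^2(\s^2).
$$

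\emph{Step 2: from $(\star)$ to pointwise positivity of $Q(f)$.} With $f\in\c^2$, Lemma~\ref{inequ} transfers the derivatives from $\phi$ onto $f$ and recasts $(\star)$ as a Poincar\'e-type inequality
$$
\int_{\s^2}\sum_{i,j=1}^2 c_{ij}[Q(f,u)]\phi_i(u)\phi_j(u)\,\H^2(du)\;\ge\;\int_{\s^2}\phi^2\,\tr(Q(f))\,\H^2(du),
$$
valid whenever $\int_{\s^2}\phi\,\tr(Q(f))\,\H^2(du)=0$ (equivalent to the constraint of $(\star)$ by Lemma~\ref{l:interch int}). To extract pointwise information from this inequality, I argue by contradiction: if at some $u_0\in\s^2$ there is a unit tangent vector $\xi$ with $c_{ij}[Q(f)](u_0)\xi_i\xi_j<0$, I test against oscillating functions $\phi_k=\eta\cos(k\theta)$, with $\eta$ a smooth cutoff concentrated near $u_0$ and $\theta$ a local function with $\nabla\theta(u_0)=\xi$, adjusted by a small additive constant to satisfy the orthogonality constraint (possible because $\int f>0$). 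The quadratic gradient term drives the left-hand side to $-\infty$ as $k\to\infty$, while the right-hand side stays bounded, contradicting the Poincar\'e inequality. Hence $C[Q(f)](u)$ is positive semi-definite for every $u\in\s^2$, and by Remark~\ref{propII.1}(ii) so is $Q(f)(u)$. This is precisely the condition for the positively $1$-homogeneous extension of $f$ to $\R^3$ to be convex, and thus $f=h_L$ for some $L\in\K^3$.

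\emph{Step 3: general $f\in\c(\s^2)$ (main obstacle).} Lemma~\ref{inequ} is no longer directly applicable, and a naive rotation-mollification $f_k=f\ast\rho_k$ does not preserve (\ref{0.3bb}), because the minimum on the right-hand side of (\ref{0.3bb}) does not commute favourably with averaging in $k$. My strategy is to retain the inequality $(\star)$---whose derivation uses only $f\in\c(\s^2)$---as the working hypothesis, and to realise Lemma~\ref{inequ} in a distributional sense by approximating the test functions $\phi$ by smoothings adapted to the Cheng--Yau integration by parts (Lemma~\ref{l:interch int}). This should yield a distributional positive semi-definiteness of $Q(f)$, which for continuous $f$ is still equivalent to $f$ being a support function. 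The execution of this programme corresponds to the technical steps of Section~\ref{dim3dss3}.
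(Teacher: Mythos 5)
Steps 1 and 2 of your proposal are correct for the simplified case and are genuinely a bit different from the paper's outline in Section~\ref{dim3dss2}: instead of exploiting evenness of $f$ and odd test functions to kill the first-order term, you impose the linear constraint $\alpha(\phi)=0$, and instead of the sawtooth-gradient construction of Lemma~\ref{propIV.2} you use oscillating functions $\eta\cos(k\theta)$. Both of these local variants work. However, Step~3 --- which you yourself identify as the ``main obstacle'' --- is not a proof; it is a declaration of intent (``distributional positive semi-definiteness'', ``approximating the test functions $\phi$ by smoothings''), and the strategy you sketch runs into concrete difficulties. First, the constraint $\int_{\s^2}f\,\tr Q(\phi)\,d\H^2=0$ in your $(\star)$ is not preserved under rotation of $\phi$, so the rotational mollification that one would naturally use cannot be applied to $(\star)$ as it stands; your fix of ``adding a small constant'' to restore the constraint needs $\int_{\s^2}f\,d\H^2\neq 0$, which you only get from the positivity assumption you are trying to drop. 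Second, it is not clear what a distributional version of Lemma~\ref{inequ} would assert when $Q(f)$ is only a distribution, nor how ``mollifying $\phi$ adapted to Cheng--Yau'' would circumvent the fact that Lemma~\ref{l:interch int} moves two derivatives onto $f$.

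The paper resolves these issues by a different mechanism. It replaces the ball with a rounded spherical cone $C(\overline P,\overline\theta)+\eta B^3$; on the relevant cap the support function is the constant $\eta$, so the first-order term $F_\eta'(0)$ carries a factor $\eta$ (see \eqref{derprima}) and vanishes in the limit $\eta\to 0^+$ without imposing any constraint on $\phi$. This yields the unconditional inequality \eqref{condizfinale} for all $\phi$ supported in a cap, which \emph{is} stable under small rotations of $\phi$, and the mollification is then applied to $f$ (Lemma~\ref{l:regularization} and Lemma~\ref{lmIV.2}), not to $\phi$. Two further ingredients you have not reproduced are Lemma~\ref{techlemma3}, which guarantees $\F(C(\overline P,\overline\theta))>0$ for some $\overline\theta$ so that the limit \eqref{valfunzione} is strictly positive, and the case analysis in Section~\ref{dim3dss4}, which handles the sign of $\F$ (reducing \eqref{0.3bb} to the stronger \eqref{BM3d} when $\F>0$ on $C^2_+$ bodies, and arguing separately when $\F$ takes a negative or a zero value). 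Your Step~1 does avoid the sign issue locally, but to complete the theorem one still has to make one of these mechanisms work, and you have not done so.
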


\subsection{Preparatory steps}\label{dim3dss1}

The proof Theorem \ref{th dim 3} will require some preparations. Part of this preparatory material
is contained in the present subsection. In particular, we provide, for the reader's convenience,
an outline of a  proof for a simplified version of Theorem \ref{th dim 3}, under more restrictive assumptions on $f$.
We also point out the technical problems that arise in removing the additional assumptions on $f$
in order to cover the general
case. These problems are then settled in Section \ref{dim3dss3}, while in Section \ref{dim3dss4}
we complete the proof of Theorem \ref{th dim 3} in its full generality.

\medskip

To begin with, we assume that $\mathcal F$ satisfies a Brunn--Minkowski inequality of the form
(\ref{0.3}), namely
\begin{equation}\label{ipopos}
\mbox{${\mathcal F}(K)\ge0\,,\qquad$   $K\in\K^3$}\,,
\end{equation}
and
\begin{equation}\label{BM3d}
{\mathcal F}((1-t)K_0+tK_1)^{1/2}\geq (1-t) {\mathcal
F}(K_0)^{1/2}+t{\mathcal F}(K_1)^{1/2}\,,\quad
 \,K_0,K_1\in\K^3\,, \, t\in[0,1]\,.
\end{equation}

\medskip

Let $K\in\K^3$ and let $h$ be the support function of $K$. Let $\phi\in C(\s^2)$ and assume
that for some $\epsilon>0$ the function $h_s:=h+s\phi$ is a support function, for every
$s\in[-\epsilon,\epsilon]$. Let $K_s$ be the convex body having $h_s$ as its support function.
Hence the family of convex bodies $\{K_s\,:\,s\in[-\epsilon,\epsilon]\}$ provides a perturbation
of $K=K_0$. Let $F:[-\epsilon,\epsilon]\to\R_+$  be defined by $F(s):={\mathcal F}(K_s)$.

\begin{lema}\label{concavity F in lambda}
Under the above assumptions and notation, the function
$$
\sqrt{F}\,:\,[-\epsilon,\epsilon]\to\R_+\,,\qquad s\mapsto\sqrt{F(s)}\,,
$$
is concave.
\end{lema}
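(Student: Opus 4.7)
The plan is to exploit the fact that the assignment $s\mapsto h_s=h+s\phi$ is \emph{linear} in $s$, and support functions add under Minkowski sum. This means that on the segment of parameters for which $h_s$ is a support function, the family $\{K_s\}$ is itself linear in $s$ with respect to Minkowski addition, so concavity of $\sqrt{F}$ reduces directly to the Brunn--Minkowski-type inequality \eqref{BM3d} assumed for $\mathcal F$.

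Concretely, I would fix $s_0,s_1\in[-\epsilon,\epsilon]$ and $\lambda\in[0,1]$. Setting $s_\lambda:=(1-\lambda)s_0+\lambda s_1\in[-\epsilon,\epsilon]$, the definition of $h_s$ gives
$$
h_{s_\lambda}=h+s_\lambda\phi=(1-\lambda)(h+s_0\phi)+\lambda(h+s_1\phi)=(1-\lambda)h_{s_0}+\lambda h_{s_1}.
$$
Since $h_{s_0}$ and $h_{s_1}$ are support functions (by hypothesis) and support functions are compatible with Minkowski combinations (as recalled in Subsection \ref{sII.1}, where $h_{sK+rL}=sh_K+rh_L$), this identity means precisely that the convex body with support function $h_{s_\lambda}$ is
$$
K_{s_\lambda}=(1-\lambda)K_{s_0}+\lambda K_{s_1}.
$$

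Now I would apply hypothesis \eqref{BM3d} to the convex bodies $K_{s_0}$ and $K_{s_1}$ with the parameter $\lambda$, obtaining
$$
{\mathcal F}\bigl((1-\lambda)K_{s_0}+\lambda K_{s_1}\bigr)^{1/2}\ge(1-\lambda)\,{\mathcal F}(K_{s_0})^{1/2}+\lambda\,{\mathcal F}(K_{s_1})^{1/2}.
$$
Using the identification of the left-hand body with $K_{s_\lambda}$ and the definition $F(s)={\mathcal F}(K_s)$, this reads
$$
\sqrt{F(s_\lambda)}\ge(1-\lambda)\sqrt{F(s_0)}+\lambda\sqrt{F(s_1)},
$$
which is exactly the concavity of $\sqrt{F}$ on $[-\epsilon,\epsilon]$. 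Note that the non-negativity assumption \eqref{ipopos} ensures that the square roots are real, so no separate justification of that point is needed.

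There is essentially no obstacle in this lemma: all the substantive content sits in the Brunn--Minkowski-type hypothesis \eqref{BM3d}, and the lemma is a transfer of that hypothesis to the one-parameter family $\{K_s\}$ via the linear-in-$s$ form of the support-function perturbation. The only point meriting care is to verify that the intermediate support functions $h_{s_\lambda}$ with $s_\lambda\in[-\epsilon,\epsilon]$ are indeed support functions, which is built into the hypothesis on the range $[-\epsilon,\epsilon]$ and automatically inherited from the convex combination of two support functions $h_{s_0}$ and $h_{s_1}$.
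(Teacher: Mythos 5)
Your proof is correct and follows essentially the same route as the paper: identify $K_{(1-\lambda)s_0+\lambda s_1}=(1-\lambda)K_{s_0}+\lambda K_{s_1}$ via linearity of support functions, then apply the Brunn--Minkowski hypothesis \eqref{BM3d} directly. The extra remarks about non-negativity and the admissibility of intermediate parameters are fine but not needed beyond what the hypotheses already give.
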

\begin{proof}  Let $s_1,s_2\in[-\epsilon,\epsilon]$ and $\lambda\in[0,1]$.
 Then we have $h_{(1-\lambda) s_1+\lambda s_2}=
(1-\lambda) h_{s_1}+\lambda h_{s_2}$, so that
$K_{(1-\lambda )s_1+\lambda s_2}=(1-\lambda) K_{s_1}+\lambda K_{s_2}$.
The conclusion now follows immediately from (\ref{BM3d}) and
the definition of $F$.
\end{proof}

\medskip

The following two remarks will be used in the sequel. Note that their validity is not restricted to
the three-dimensional case.

\begin{remark}\label{remark1}
{\rm Assume that $\mathcal F$ satisfies (\ref{ipopos}) and (\ref{0.3}) and
 does not vanish identically. Then
${\mathcal F}(L)>0$ for all $L\in\K^n$ of class $C^2_+$. Indeed, since convex bodies of class $C^2_+$
are dense in $\K^n$ and $\mathcal F$ is continuous, there exists  a convex body $K$ of class $C^2_+$ such that
${\mathcal F}(K)>0$.
As $K$ and $L$ are of class $C^2_+$, a suitable rescaled copy of $K$ is a {\it summand}
of $L$ (cf.~\cite[Cor.~3.2.10]{Schneider}), that is there exist $\lambda\in(0,1]$ and $M\in\K^n$ such
that $L=\lambda K+(1-\lambda)M$. By
(\ref{ipopos}) and (\ref{0.3}) we immediately get ${\mathcal F}(L)\ge{\mathcal F}(\lambda K)=\lambda^{n-1}\mathcal{F}(K)>0$.
In particular, we have ${\mathcal F}(B^n)>0$.}
\end{remark}

\medskip

\begin{remark}\label{remark2}{\rm Let $f,h\in C(\sfe)$. Then 
$$
\int_{\sfe}f(u)\, \Sa(K,du)=\int_{\sfe}h(u)\, \Sa(K,du)\,,\quad K\in\K^n\,,
$$ 
if and only if $f-h$ is the
restriction of a linear function to the unit sphere.
}
\end{remark}

\subsection{Outline of the proof of Theorem \ref{th dim 3} in a simplified case}\label{dim3dss2}
In this subsection we make several additional assumptions on the functional $\mathcal F$
(or rather, on $f$), and we outline the proof of Theorem \ref{gral statement}
in this special case.

We assume:
\begin{itemize}
\item[{\rm (i)}] {\it regularity:} $f\in C^2(\s^2)$;
\item[{\rm (ii)}] {\it symmetry:} $f$ is an even function, i.e. $f(u)=f(-u)$ for every $u\in\s^2$;
\item[{\rm (iii)}] {\it positivity:} $f>0$ on $\s^2$.
\end{itemize}
In particular, {\rm (iii)} implies that ${\mathcal F}(K)>0$ for every convex body $K$ with non-empty
interior. Clearly, we also assume that the corresponding functional $\mathcal F$ satisfies inequality
(\ref{BM3d}).

The support function of the unit ball $B^3$ is the
constant function $h\equiv1$ on $\s^2$. For a function $\psi\in C^2(\s^2)$ and $s\in\R$ consider the function
$h_s=1+s\psi$. Let $I$ denote the $2\times2$ identity matrix. Then, if $|s|$ is sufficiently small, the matrix
$Q(h_s,u)=I+sQ(\psi,u)$ is positive definite for every $u\in\s^2$. Hence
there exists $\epsilon>0$ such that $h_s\in{\mathfrak S}$ for every
$s\in[-\epsilon,\epsilon]$. Let $K_s$ be the convex body having $h_s$ as its support function
and define $F(s)={\mathcal F}(K_s)$. According to Lemma \ref{concavity F in lambda},
$\sqrt{F}$ is concave. Since $F(s)>0$ and $F$ is twice differentiable, we obtain that
\begin{equation}\label{dimsempl1}
2F(0)F''(0)-(F'(0))^2\le0\,.
\end{equation}
From (\ref{I.0a}) we now conclude that
$$
F(s)=\int_{\s^2}f(u)\,\det(Q(h_s,u))\,\H^2(du)=
\int_{\s^2}f(u)\,\det(I+sQ(\psi,u))\,\H^2(du)\,.
$$
Differentiating with respect to $s$,  at $s=0$ we get that
\begin{equation*}
F'(0)=\int_{\s^2}f(u)\,{\rm trace}(Q(\psi,u))\,\H^2(du)\,,\quad
F''(0)=2\,\int_{\s^2}f(u)\,\det(Q(\psi,u))\,\H^2(du)\,.
\end{equation*}
Assume that $\psi$ is odd, that is $\psi(u)=-\psi(-u)$ for  $u\in\s^2$.
Then ${\rm trace}(Q(\psi,\cdot))$ is odd as well and, by {\rm (ii)},
it follows that $F'(0)=0$. Hence,
by (\ref{dimsempl1}) and since $\mathcal F (0)>0$,
we get $F''(0)\le0$, i.e.
\begin{equation}\label{dimsempl2}
\int_{\s^2}f(u)\,\det(Q(\psi,u))\,\H^2(du)\le 0
\end{equation}
for every odd function $\psi\in C^2(\s^2)$.
We now want to remove the assumption of being odd on the test function,
at the price of reducing its support. Let $\phi\in C^2(\s^2)$ be such that
its support is contained in an open hemisphere $\mathcal E$ of $\s^2$, and define
$$
\psi(u)=
\begin{cases}
\phi(u),& \text{ if }u\in{\mathcal E},\\
-\phi(-u),& \text{ if }u\notin{\mathcal E}.
\end{cases}
$$
Clearly, $\psi$ is well defined on $\s^2$ and zero close to the  boundary of $\mathcal{E}$. Since
 $\psi$ is odd and $\det(Q(\psi,\cdot))$ is even, we deduce from (\ref{dimsempl2}) that
\begin{equation}\label{dimsempl3}
\int_{\s^2}f\,\det(Q(\phi))\,d\H^2\le 0
\end{equation}
for any $\phi\in C^2(\s^2)$ whose support is contained in an open hemisphere.
Writing $(c_{ij})$ for the matrix $C[Q(f)]$, we now obtain from Lemma \ref{inequ} that
\begin{eqnarray}\label{dimsempl4}
\int_{\s^2}\phi^2\,{\rm trace}(c_{ij})\,d\H^2
\le\int_{\s^2}\sum_{i,j=1}^2c_{ij}\phi_i\phi_j\,d\H^2\,,
\end{eqnarray}
for any such $\phi\in C^2(\s^2)$.
This is a functional inequality of Poincar\'e type on $\s^2$. It is rather intuitive that
such an inequality can be valid for any $\phi$ as described above only if the matrix $(c_{ij})$
is positive semi-definite
throughout $\s^2$. This fact is demonstrated in Lemma \ref{propIV.2}. The idea of the proof is
that if at some point $u_0\in \s^2$ the matrix $(c_{ij})$ admits an eigenvector $e$ with
negative eigenvalue, we may construct
a sequence of admissible test functions $\phi_k$, $k\in \N$, such that the
$L^\infty(\s^2)$--norms of these functions converge to zero and  their gradients
tend to be parallel to $e$, of constant unit norm in a neighbourhood
of $u_0$ and zero everywhere else. Choosing $\phi=\phi_k$ in (\ref{dimsempl4}), and letting $k$ tend to
infinity, we get a contradiction.

Once we know that $(c_{ij})$ is positive semi-definite, the same is true for $Q(f)$,
and then $f$ is a support function
(see Corollary \ref{appendixcor1}).

In the next subsection, and in the Appendix, we prove some results which permit us to adapt the
above idea when the assumptions (i), (ii), (iii) are not imposed. In particular, to remove symmetry,
 in the preceding perturbation argument we replace the ball by a parametric ``spherical cone'' $C$.
 This allows us to cancel the
term $F'(0)$ in (\ref{dimsempl1}). Then, in order to prove (\ref{dimsempl3}), we need to know that
$F(0)={\mathcal F}(C)>0$ for a suitable cone. A corresponding fact is provided in Lemma \ref{techlemma3}. Finally, the
regularity assumption will be removed by a standard approximation procedure (described in the Appendix),
which will enter at the level of (\ref{dimsempl3}).

\subsection{Some technical lemmas}\label{dim3dss3}

Let $\theta\in[0,\pi/2]$ and $P\in\s^2$. We will denote by
$I_{\theta}(P)$ the set of points in $\s^2$ with spherical distance to $P$
less than or equal to $\theta$. More explicitly,
$$
I_\theta(P):=\{Q\in\s^2\,:\,(P,Q)\ge\cos\theta\}
$$
is a spherical cap of $\s^2$ with angle of aperture $\theta$ around $P$.
Let $C(P,\theta)$ be the convex hull of $\{0\}$ and $I_{\theta}(P)$.
Equivalently, if $D$ is the cone $D:=\{tx\,:\,x\in I_\theta(P)\,,\,t\ge0\}$,
then $C(P,\theta)$ is the intersection of $D$ with
the unit ball centered at the origin. Clearly, $C(P,\theta)$ degenerates
for $\theta=0$  into a segment with end-points $P$ and $\{0\}$, and
it coincides with a half ball for $\theta=\pi/2$. The area measure of $C(P,\theta)$
is described in the next lemma. Recall that ${\rm nor}(K,x)$ is the set of
exterior unit normals of the convex body $K$ at $x\in \partial K$.

\begin{lema}\label{lema S2}
For  $P\in\s^2$ and  $\theta\in [0,\pi/2)$, we have
\[
\Ss\left(C(P,\theta),\cdot\right)=\H^2(\cdot)\meares{I_{\theta}(P)}+
\frac{\tan{\theta}}{2}\,\H^1(\cdot)\meares{\Gamma_{\theta}(P)}\,,
\]
where
\[
\Gamma_{\theta}(P):=\{x\in\s^2: (x,P)=-\sin{\theta}\}
\]
and $\H^2(\cdot)\meares{I_{\theta}(P)}$ and $\H^1(\cdot)\meares{\Gamma_{\theta}(P)}$ denote the restrictions of the
measures $\H^2$ and $\H^1$ to $I_{\theta}(P)$ and $\Gamma_{\theta}(P)$, respectively.
\end{lema}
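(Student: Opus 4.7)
The plan is to exploit the explicit piecewise-smooth structure of $\partial C(P,\theta)$ and read off $\Ss(C(P,\theta),\cdot)$ from the behaviour of the Gauss map on each face. I would split $\partial C(P,\theta)$ into the spherical cap $I_\theta(P)\subset\s^2$, the lateral cone surface $\Lambda$ joining $0$ to $\partial I_\theta(P)$, the edge $\partial I_\theta(P)$, and the apex $\{0\}$. Using $\Ss(C(P,\theta),\omega)=\H^2(\tau(C(P,\theta),\omega))$ for Borel $\omega\subseteq\s^2$, and noting that the last two pieces are of dimension $1$ and $0$, only the cap and $\Lambda$ can actually contribute to the area measure.

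On $I_\theta(P)\subset\partial B^3$ the outer unit normal to $C(P,\theta)$ at $x$ is $x$ itself, so the Gauss map restricted to this face is the identity and the cap contributes exactly the summand $\H^2\meares I_\theta(P)$. For $\Lambda$, I would parametrize $\partial I_\theta(P)$ as $\gamma(\phi)=\cos\theta\,P+\sin\theta\,e(\phi)$, where $\phi\mapsto e(\phi)$ is a unit-speed loop in $P^\perp\cap\s^2$, and then $\Lambda$ via $(t,\phi)\in[0,1]\times[0,2\pi)\mapsto t\gamma(\phi)$. A short calculation shows that the outer unit normal is constant along each generator and equal to
$$
n(\phi)=-\sin\theta\,P+\cos\theta\,e(\phi)\in\Gamma_\theta(P),
$$
so the Gauss map collapses $\Lambda$ onto the circle $\Gamma_\theta(P)$. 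By the rotational symmetry of $C(P,\theta)$ about $P$, the contribution of $\Lambda$ to $\Ss(C(P,\theta),\cdot)$ is therefore a constant multiple of $\H^1\meares \Gamma_\theta(P)$.

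The constant is pinned down by matching Jacobians: the parametrization above yields area element $t\sin\theta\,dt\,d\phi$ on $\Lambda$, so a Borel set $N\subseteq[0,2\pi)$ of $\phi$-values gives $\H^2$-mass $(\sin\theta/2)\,|N|$, whereas $\phi\mapsto n(\phi)$ has speed $|n'(\phi)|=\cos\theta$ and so produces $\H^1$-mass $\cos\theta\,|N|$ on $\Gamma_\theta(P)$. The ratio $(\sin\theta/2)/\cos\theta=(\tan\theta)/2$ is the required density, and adding the two contributions gives the formula. The main obstacle is merely bookkeeping: one has to justify that the edge $\partial I_\theta(P)$, although carrying nontrivial normal arcs from $\gamma(\phi)$ to $n(\phi)$, contributes only an $\H^2$-null subset of $\tau(C(P,\theta),\omega)$ (and likewise for the apex, whose normal cone meets $\s^2$ in the cap $\{x:(x,P)\le-\sin\theta\}$), and to track the two Jacobians carefully so that the density $\tan\theta/2$ emerges; the hypothesis $\theta<\pi/2$ keeps the cone proper and in particular keeps $\tan\theta$ finite.
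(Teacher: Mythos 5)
Your proposal is correct and follows essentially the same route as the paper: decompose $\partial C(P,\theta)$ into the spherical cap, the lateral conical face, and the lower-dimensional edge/apex (which are $\H^2$-null and so contribute nothing to $\Ss$), observe that the Gauss map is the identity on the cap and collapses the cone onto $\Gamma_\theta(P)$, and then compute the density. If anything you supply more detail than the paper, which asserts the final Jacobian identity $\H^2(\nu^{-1}(\omega_2))=\tfrac{\tan\theta}{2}\,\H^1(\omega_2)$ as ``not hard to check'' while you derive the ratio $(\sin\theta/2)/\cos\theta$ explicitly.
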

\begin{proof} 
The case $\theta=0$ is clear, hence assume that $\theta\in(0,\pi/2)$.
As the area measure commutes with rotations (see \cite{Schneider}, p. 205), we may assume that
$P=(0,0,1)$. Hence
$$
C(P,\theta)=\{(r\sin\theta'\cos\varphi, r\sin\theta'\sin\varphi,r\cos\theta' )\,:\,0\le r\le 1,\,0\le\theta'\le\theta,\,
0\le\varphi\le2\pi\}\,.
$$
Next we identify the two relevant portions of the boundary of $C(P,\theta)$:
the spherical cap $A_1$ and the
conical surface $A_2$,
\begin{eqnarray*}
A_1&=&\{(\sin\theta'\cos\varphi,\sin\theta'\sin\varphi,\cos\theta')\,:\,0\le\theta'<\theta,\,
0\le\varphi\le2\pi\}\,,\\
A_2&=&\{r(\sin\theta\cos\varphi,\sin\theta\sin\varphi,\cos\theta)\,:\,0<r<1,\,
0\le\varphi\le2\pi\}\,.
\end{eqnarray*}
In particular, $I_{\theta}(P)$ is the closure of $A_1$. Hence $A_1\cup A_2\subseteq\partial C(P,\theta)$ and
\begin{equation}\label{appendix7}
\H^2(\partial C(P,\theta)\setminus(A_1\cup A_2))=0\,.
\end{equation}
Note that $\partial C(P,\theta)$ is differentiable at each point of $A_1\cup A_2$. For $Q\in A_1\cup A_2$, let $\nu(Q)$ be
the outer unit normal to $\partial C(P,\theta)$ at $Q$, i.e.~${\rm nor}(C(P,\theta),Q)=\{\nu(Q)\}$.
If $Q\in A_1$, then $\nu(Q)=Q$. If $Q=r(\sin\theta\cos\varphi,\sin\theta\sin\varphi,\cos\theta)\in A_2$, then
$$
\nu(Q)=(\cos\theta\cos\varphi,\cos\theta\sin\varphi,-\sin\theta)\,.
$$
Hence $\nu(A_1\cup A_2)=A_1\cup\Gamma_\theta (P)$. By (\ref{appendix7}) and the definition of the area measure
this proves that for every Borel subset $\omega$ of $\s^2$ we have
$\Ss(C(P,\theta),\omega)=\Ss(C(P,\theta),\omega_1\cup\omega_2)$,
where $\omega_1=\omega\cap A_1$ and $\omega_2=\omega\cap \Gamma_\theta(P)$, and thus
\begin{eqnarray*}
\Ss(C(P,\theta),\omega)&=&\Ss(C(P,\theta),\omega_1\cup \omega_2)
=\Ss(C(P,\theta),\omega_1)+\Ss(C(P,\theta),\omega_2)\\
&=&\H^2(\nu^{-1}(\omega_1))+\H^2(\nu^{-1}(\omega_2))\\
&=&\H^2(\omega_1)+\H^2(\nu^{-1}(\omega_2))\,.
\end{eqnarray*}
Finally, it is not hard to check that
$$
\H^2(\nu^{-1}(\omega_2))=\frac{\tan{\theta}}{2}\,\H^1(\omega_2)\,.
$$
\end{proof}


\begin{lema}\label{propIV.2}
Let $f\in C^{2}(\s^{2})$, $P\in\s^2$ and $\theta\in(0,\pi/2)$. If,
for every $\phi\in\c^\infty(\s^2)$ with support  contained in $I_{\theta}(P)$, we have
\begin{equation}\label{pippo2}
\int_{\s^2}{f(u)\det\left(Q(\phi,u)\right)\,\H^2(du)}\leq 0\,,
\end{equation}
then $Q(f,u)$ is positive semi-definite for every $u\in I_\theta(P)$.
\end{lema}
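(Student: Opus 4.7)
The plan is to recast the hypothesis \eqref{pippo2} via Lemma \ref{inequ} as a Poincar\'e-type inequality on $\s^2$, and then to exclude the existence of a negative eigenvalue of the cofactor matrix $C[Q(f,u)]$ at any interior point of $I_\theta(P)$ by testing against a family of highly oscillating smooth functions concentrated near the hypothetical bad point. Combining \eqref{pippo2} with the identity in Lemma \ref{inequ} immediately gives
$$
\int_{\s^2}\phi(u)^2\,\text{trace}\bigl(Q(f,u)\bigr)\,\H^2(du)\le\int_{\s^2}\sum_{i,j=1}^2 c_{ij}[Q(f,u)]\,\phi_i(u)\phi_j(u)\,\H^2(du)
$$
for every $\phi\in\c^\infty(\s^2)$ with $\supp\phi\subseteq I_\theta(P)$. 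By Remark \ref{propII.1}(ii), the positive semi-definiteness of $Q(f,u)$ is equivalent to that of $C[Q(f,u)]$, and since $f\in\c^2(\s^2)$ it suffices by continuity to prove this for $u\in\inter I_\theta(P)$.

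Arguing by contradiction, suppose that at some $u_0\in\inter I_\theta(P)$ there is a unit tangent vector $e\in T_{u_0}\s^2$ whose components in a local orthonormal frame satisfy $\sum_{i,j=1}^{2}c_{ij}[Q(f,u_0)]e_ie_j=-\lambda<0$. Introduce geodesic normal coordinates $(x_1,x_2)$ on a small open geodesic ball $U\subseteq\inter I_\theta(P)$ centered at $u_0$, chosen so that $\partial_{x_1}|_{u_0}=e$. Since the quadratic form on $T_u\s^2$ determined by $C[Q(f,u)]$ depends continuously on $u$, by shrinking $U$ we may assume that its value on $\partial_{x_1}|_u$ is $\le -\lambda/2$ for every $u\in U$. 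Fix any non-negative, non-zero cutoff $\chi\in\c^\infty_c(U)$ and, for $k\in\N$, set
$$
\phi_k(u):=\frac{1}{k}\,\chi(u)\sin\bigl(k\,x_1(u)\bigr)
$$
on $U$, extended by $0$ to $\s^2$. Each $\phi_k$ belongs to $\c^\infty(\s^2)$, has support in $I_\theta(P)$, and satisfies $\|\phi_k\|_\infty\le 1/k\to 0$.

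A direct computation yields $\nabla\phi_k=\chi\cos(k x_1)\,\partial_{x_1}+O(1/k)$ uniformly on $U$, so that $\sum_{i,j}c_{ij}[Q(f,\cdot)](\phi_k)_i(\phi_k)_j$ equals $\chi^2\cos^2(k x_1)$ times the value of $C[Q(f,u)]$ on $\partial_{x_1}|_u$, plus a uniform $O(1/k)$ remainder. By the Riemann--Lebesgue oscillation $\cos^2(k x_1)\rightharpoonup 1/2$ in $L^1(U)$, one obtains
$$
\int_{\s^2}\sum_{i,j=1}^2 c_{ij}[Q(f)]\,(\phi_k)_i(\phi_k)_j\,\H^2(du)\longrightarrow\frac{1}{2}\int_{U}\chi^2\,C[Q(f,u)]\bigl(\partial_{x_1}|_u,\partial_{x_1}|_u\bigr)\,\H^2(du)\le -\frac{\lambda}{4}\int_{U}\chi^2\,\H^2(du)<0.
$$
On the other hand, the left-hand side of the Poincar\'e-type inequality tends to $0$ because $\phi_k^2\to 0$ uniformly and $\text{trace}(Q(f,\cdot))$ is bounded on $\supp\chi$. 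Passing to the limit produces the contradiction $0\le -\lambda\int_U\chi^2/4<0$, so $C[Q(f,u)]\ge 0$ throughout $\inter I_\theta(P)$, and by continuity on all of $I_\theta(P)$; Remark \ref{propII.1}(ii) then delivers $Q(f,u)\ge 0$.

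I expect the main obstacle to be the careful bookkeeping between the coordinate vector field $\partial_{x_1}$, which naturally arises after differentiating the oscillating factor, and the abstract orthonormal frame in which the $c_{ij}$'s and $(\phi_k)_i$'s are written. Because in normal coordinates the metric equals $\delta_{ij}+O(|x|^2)$, the coordinate basis is only approximately orthonormal away from $u_0$; one must verify that the resulting corrections either enter as $O(1/k)$ in the gradient expansion or are absorbed into the continuity of the quadratic form on $U$, and then shrink $U$ so that the uniform bound $-\lambda/2$ survives. Once these estimates are secured, the oscillation/concentration step runs as sketched.
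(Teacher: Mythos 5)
Your proposal is correct and follows essentially the same plan as the paper's proof: reduce, via Lemma \ref{inequ}, to the Poincar\'e-type inequality
$$
\int_{\s^2}\phi^2\,\operatorname{trace}\bigl(Q(f)\bigr)\,d\H^2
\le
\int_{\s^2}\sum_{i,j=1}^2 c_{ij}[Q(f)]\,\phi_i\phi_j\,d\H^2,
$$
then contradict it at a hypothetical point $u_0$ where $C[Q(f,u_0)]$ has a negative eigendirection, by means of a family of rapidly oscillating test functions concentrated near $u_0$ whose sup-norms vanish but whose gradients retain unit strength along the bad direction. The only substantive variation is the choice of oscillating family. The paper uses a Lipschitz sawtooth $g_\epsilon$ (with $|g_\epsilon'|=1$ a.e.) modulated by fixed bump factors, which requires a preliminary approximation step to pass the Poincar\'e-type inequality from $C^\infty$ to Lipschitz test functions, after which the pointwise a.e.\ limit $|\partial_{x_1}\Phi_\epsilon|\to G(x_1/r)G(x_2/r)$ and dominated convergence finish the argument. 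You instead use the smooth sinusoids $\phi_k=\frac{1}{k}\chi\sin(kx_1)$ and the weak limit $\cos^2(kx_1)\rightharpoonup\frac12$ (Riemann--Lebesgue). This stays inside the $C^\infty$ class from the start, so no Lipschitz extension is needed --- a mild streamlining --- at the cost of replacing a pointwise a.e.\ limit by a weak one. Both versions must reconcile the coordinate basis arising from the oscillating factor with the orthonormal frame in which $c_{ij}$ is written, and both do so by shrinking the neighborhood of $u_0$ and invoking continuity of $C[Q(f,\cdot)]$ to preserve the strict negativity; your final paragraph correctly identifies and resolves this point (note in particular that $\nabla x_1=\partial_{x_1}+O(|x|^2)$ in normal coordinates, an error uniform in $k$, which is precisely what the shrinking of $U$ controls, whereas the term involving $\nabla\chi$ is the genuine $O(1/k)$ remainder).
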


\begin{proof}
By Remark \ref{propII.1} (ii) and a continuity argument, it is sufficient to show that
$C[Q(f,u)]$ is positive semi-definite for every $u$ in the  interior of $I_\theta(P)$.
From the assumption (\ref{pippo2}) we deduce by means of Lemma \ref{inequ} that
\begin{equation}\label{int f det leq 0 rewritten}
\int_{\s^2}{\phi^2{\rm trace}(C[Q(f)])\,d\H^2}\leq
\int_{\s^2}{\sum_{i,j=1}^2{c_{ij}[Q(f)]\phi_i\phi_j}\,d\H^2} \,.
\end{equation}
By a standard approximation argument \eqref{int f det leq 0 rewritten} can be extended to every function
$\phi$, with support contained in $I_\theta(P)$, which is merely Lipschitz on $\s^2$
(correspondingly, the first derivatives of $\phi$ will be defined $\H^{2}$-a.e.
on $\s^2$).

For the sake of brevity, we define $c_{ij}(u):=c_{ij}[Q(f,u)]$ for $u\in\s^2$. Arguing by
contradiction, let us assume that there exists some  $\bar u$ in the
interior of $ I_\theta(P)$ and a vector
$v=(v_1,v_2)\ne0$ such that
\[
\sum_{i,j=1}^{2}c_{ij}[Q(f,\bar u)]v_iv_j < 0\,.
\]
Without loss of generality (by a proper choice of the coordinate system),
we may assume that $\bar u=(0,0,1)\in\s^2$  and $v=(1,0)$. Then
\begin{equation}\label{IV.0aaa}
\sum_{i,j=1}^{2}c_{ij}({\bar u})v_iv_j=c_{11}({\bar u})<0\,.
\end{equation}
We identify $H:=\{x=(x_1,x_2,x_3)\in\R^3\,:\,x_3=0\}$ with $\R^{2}$ and,
for $r\in(0,1)$, we set
\begin{align*}
D_r:&=\{(x_1,x_2)\in\R^2:\abs{x_i}\le r\,,\,i=1,2\}\,,\\
\tilde D_r:&=\{u=(u_1,u_2,u_3)\in\s^2\,:\,u_3>0\,,\,(u_1,u_2)\in
D_r\}\,.
\end{align*}
Using (\ref{IV.0aaa}), we will construct a Lipschitz function $\phi$ with support contained 
in $I_\theta(P)$, but such that
inequality \eqref{int f det leq 0 rewritten} fails to be true. In order to obtain
such a function, we first define $\bar g\,:\,[-1,1]\to \R_+ $ by $\bar
g(t)=1-\abs{t}$, and denote by $g\,:\,\R\to \R_+$ the periodic
extension of $\bar g$ to the whole real line. Let $\epsilon>0$ and define
$g_{\epsilon}(x)=\epsilon g(x/\epsilon)$. Notice that
$g_{\epsilon}\to 0$ uniformly on $\R$, as $\epsilon\to 0^+$.
In the following, by writing $\epsilon\to 0^+$ we mean that $\epsilon$
runs through a decreasing sequence which converges to zero.
Let $G\,:\,\R\to \R$ be defined by
\[
G(t):=\left\{
\begin{array}{ll} 1\,, & \text{for } t\in[-1/2,1/2]\,, \\
                          0\,, & \text{for } |t|\geq 1\,,\\
                         1-2|t|\,, & \text{otherwise\,.}
\end{array}\right.
\]
Hence $G$ is bounded and Lipschitz. Let us fix $r\in(0,1)$ for the moment (the choice of $r$ 
will be adjusted subsequently, but $r$ will be bounded away from $0$, independent of $\epsilon$). The function
\begin{eqnarray*}
\Phi_{\epsilon}(x_1,x_2)=
g_{\epsilon}(x_1)G(x_1/r)G(x_2/r)\,,\quad(x_1,x_2)\in D_r\,,
\end{eqnarray*}
satisfies ${\rm sprt}(\Phi_{\epsilon})\subseteq D_r$ and is Lipschitz
on $D_r$. We have
\[
\frac{\partial\Phi_{\epsilon}}{\partial x_2}(x_1,x_2)=\frac{1}{r}
g_{\epsilon}(x_1) G'(x_2/r)G(x_1/r)\,\quad \mbox{for $\H^2$-a.e.~$(x_1,x_2)\in D_r$\,.}
\]
As $0\le G\le 1$, $\abs{G'}\le 2$ and $\abs{g_\epsilon}\le\epsilon$ in $\R$,
$$
\abs{\frac{\partial\Phi_\epsilon}{\partial x_2}}
\le\frac{2\epsilon}{r}\,,\quad\mbox{$\H^2$-a.e.~in $D_r$\,,}
$$
and then
\begin{equation}
\label{IV.0aa} \lim_{\epsilon\to0^+}\frac{\partial\Phi_\epsilon}{\partial
x_2}=0\,,\quad \mbox{$\H^2$-a.e.~in $D_r$.}
\end{equation}
On the other hand, for $\H^2$-a.e. $(x_1,x_2)\in D_r$ we have
\[
\frac{\partial\Phi_\epsilon}{\partial x_1}(x_1,x_2)=
\frac{1}{r}g_\epsilon(x_1)G'(x_1/r)G(x_2/r)+g_{\epsilon}'(x_1)G(x_1/r)G(x_2/r)\,.
\]
As $\abs{g'_\epsilon}=1$ holds $\H^1$-a.e. in $\R$, it follows that
\begin{equation}\label{IV.0b}
\lim_{\epsilon\to0^+}\left|{\frac{\partial\Phi_\epsilon}{\partial
x_1}}(x_1,x_2)\right|=G(x_1/r)G(x_2/r)\quad\mbox{for $\H^{2}$-a.e.
$(x_1,x_2)\in D_r$}\,.
\end{equation}
In particular, the above limit equals $1$, $\H^{2}$-a.e.~in $D_{r/2}$.
Moreover, we have
$$
\left|{\frac{\partial\Phi_\epsilon}{\partial x_1}}(x_1,x_2)\right|\le \frac{2\epsilon}{r}+1
\quad \mbox{for $\H^2$-a.e. $(x_1,x_2)\in D_r$}\,.
$$
Next, consider the function
$$
\phi_\epsilon(u)=\phi_{\epsilon}(u_1,u_2,u_3):=\Phi_{\epsilon}(u_1,u_2)\,,\quad
\,u\in\tilde D_r\,,
$$
and extend $\phi_\epsilon$ to be zero in the rest of the unit sphere
$\s^2$. As $\bar u$ is in the interior of $ I_\theta(P)$,
if $r$ is sufficiently small, then the support of
$\phi_\epsilon$ is contained in $I_{\theta}(P)$.
In the sequel, for $u=(u_1,u_2,u_3)\in\tilde D_r$ we set
$u'=(u_1,u_2)\in D_r$, i.e., $u=(u',u_3)$.
We may choose $r$ small enough that there exists a local orthonormal
frame  on $\tilde D_r$. Taking covariant derivatives with
respect to this frame, by \eqref{int f det leq 0 rewritten} we have
\begin{equation}\label{previnequality}
\int_{\s^2}{\phi_{\epsilon}^2\,{\rm trace}\left(C[Q(f)]\right)\,d\H^2}\leq
\int_{\s^{2}}{\sum_{i,j=1}^{2}\,c_{ij}\left[Q(f)\right](\phi_{\epsilon})_i(\phi_{\epsilon})_j\,d\H^{2}}\,.
\end{equation}
Since $\Phi_\epsilon$ converges to zero uniformly as $\epsilon\to 0^+$,
the same is valid for $\phi_\epsilon$. Hence, taking limits on both sides of \eqref{previnequality}, we get
\begin{equation}
\label{IV.0c} 0\leq
\liminf_{\epsilon\to0^+}\int_{\s^2}{\sum_{i,j=1}^{2}{c_{ij}\,(\phi_{\epsilon})_i(\phi_{\epsilon})_j}}\,d\H^2\,.
\end{equation}
The covariant derivatives of $\phi_{\epsilon}$ can be computed in terms of
partial derivatives of $\Phi_\epsilon$ with respect to Cartesian
coordinates on $D_r$. In particular, for $u\in\tilde D_r$  there exists a $2\times 2$ matrix
$(\gamma_{ij}(u))_{i,j=1}^2$ with
$\gamma_{ij}\in\c^\infty( \tilde D_r)$, for $i,j=1,2$, such that
\[
(\phi_{\epsilon})_i(u)=
\sum_{k=1}^{2}\gamma_{ik}(u)\frac{\partial\Phi_\epsilon}{\partial
u_k}(u')\,\quad\mbox{for $\H^{2}$-a.e. $u\in \tilde D_r$\,.}
\]
We may assume that the local orthonormal frame has been chosen so that
$(\gamma_{ij}(\bar{u}))_{i,j=1}^2$ is the identity matrix.
Then, for $\H^2$-a.e. $u\in\tilde D_r$
\[
\sum_{i,j=1}^{2}{c_{ij}(u)(\phi_{\epsilon})_i(u)(\phi_{\epsilon})_j(u)}=
\sum_{i,j=1}^{2}\sum_{k,l=1}^{2}c_{ij}(u)\gamma_{ik}(u)\gamma_{jl}(u)\frac{\partial\Phi_\epsilon}{\partial
u_k}(u')\frac{\partial\Phi_\epsilon}{\partial u_l}(u')\,.
\]
This expression is bounded, in absolute value, by the boundedness of the
partial derivatives of $\Phi_\epsilon$. Moreover, by (\ref{IV.0aa}) and
(\ref{IV.0b}), for $\H^2$-a.e. $u\in\tilde D_r$ we have
$$
\lim_{\epsilon\to0^+}
\sum_{i,j=1}^{2}{c_{ij}(u)(\phi_{\epsilon})_i(u)(\phi_{\epsilon})_j(u)}=
G^2(u_1/r)G^2(u_2/r)
\sum_{i,j=1}^{2}c_{ij}(u)\gamma_{i1}(u)\gamma_{j1}(u)\,.
$$
Note that
$$
\sum_{i,j=1}^{2}c_{ij}(\bar u)\gamma_{i1}(\bar u)\gamma_{j1}(\bar
u)=c_{11}(\bar u)<0\,.
$$
Consequently, we may choose $r$ sufficiently small so that
$$
\sum_{i,j=1}^{2}c_{ij}(u)\gamma_{i1}(u)\gamma_{j1}(u)\le
c<0\,,\quad\, u\in\tilde D_r\,.
$$
Then, by the dominated convergence theorem, we have
\begin{align*}
&\lim_{\epsilon\to0^+}\int_{\s^2}\sum_{i,j=1}^{2}{c_{ij}(u)(\phi_{\epsilon})_i(u)(\phi_{\epsilon})_j}(u)\,
\H^2(du)\\
&\qquad =\int_{\s^2}G^2(u_1/r)G^2(u_2/r)\sum_{i,j=1}^{2}c_{ij}(u)\gamma_{i1}(u)\gamma_{j1}(u)\,\H^{2}(du)\\
&\qquad =\int_{\tilde D_r}G^2(u_1/r)G^2(u_2/r)\sum_{i,j=1}^{2}c_{ij}(u)\gamma_{i1}(u)\gamma_{j1}(u)\,\H^{2}(du)\\
&\qquad \le c\cdot \int_{\tilde D_r}G^2(u_1/r)G^2(u_2/r) \,\H^{2}(du)<0\,.
\end{align*}
This is in contradiction with (\ref{IV.0c}).
\end{proof}

\begin{lema}\label{techlemma3}
Let $f\in C(\sfe)$. 
Assume that $\mathcal{F}$ is not identically zero and satisfies (\ref{ipopos}) and
(\ref{BM3d}). Then, for every $P\in\s^2$ there exists some $\theta\in(0,\pi/2)$ so that
\[
\mathcal{F}\left(C(P,\theta)\right)>0.
\]
\end{lema}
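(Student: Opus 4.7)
I will argue by contradiction. Suppose for some $P \in \s^2$ we have $\mathcal{F}(C(P,\theta)) \le 0$ for every $\theta \in (0,\pi/2)$. By the nonnegativity assumption \eqref{ipopos}, this forces $\mathcal{F}(C(P,\theta)) = 0$ on all of $(0,\pi/2)$, and by Lemma \ref{lema S2} this is the integral identity
\begin{equation*}
\int_{I_\theta(P)} f\, d\H^2 \;+\; \frac{\tan\theta}{2}\int_{\Gamma_\theta(P)} f\, d\H^1 \;=\; 0, \qquad \theta \in (0,\pi/2). \qquad (\ast)
\end{equation*}
My plan is to extract two consequences from $(\ast)$ by letting $\theta \to 0^+$ and $\theta \to (\pi/2)^-$, and then combine them with the valuation property and Brunn--Minkowski to contradict $\mathcal{F}(B^3) > 0$ (which holds by Remark \ref{remark1} since $\mathcal{F}$ is not identically zero).

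Letting $\theta \to (\pi/2)^-$ in $(\ast)$: the bodies $C(P,\theta)$ converge in the Hausdorff metric to the half-ball $H(P) := \{x \in B^3 : (x,P) \ge 0\}$, whose area measure is $\H^2 \meares I_{\pi/2}(P) + \pi\,\delta_{-P}$ (verified exactly as in the proof of Lemma \ref{lema S2}, the boundary of $H(P)$ being a hemisphere and an equatorial disk). By continuity of $\mathcal{F}$ we obtain $\mathcal{F}(H(P)) = 0$, i.e.
$\int_{I_{\pi/2}(P)} f\, d\H^2 = -\pi f(-P)$. Letting $\theta \to 0^+$: by continuity of $f$ at $P$, $\int_{I_\theta(P)} f\, d\H^2 = O(\theta^2)$, while $\frac{\tan\theta}{2}\int_{\Gamma_\theta(P)} f\, d\H^1 = \frac{\theta}{2} E(P) + o(\theta)$ with $E(P) := \int_{\{u \in \s^2:\,(u,P)=0\}} f\, d\H^1$; vanishing of the first-order coefficient in $(\ast)$ forces $E(P) = 0$.

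For the contradiction, I combine these identities with the valuation property applied to $B^3 = H(P) \cup H(-P)$, whose intersection is the equatorial disk $D_P$. A direct computation gives $\mathcal{F}(D_P) = \pi(f(P) + f(-P))$, which is nonnegative by applying \eqref{ipopos} to $D_P$; hence
\[
\mathcal{F}(H(-P)) \;=\; \mathcal{F}(B^3) + \mathcal{F}(D_P) \;\ge\; \mathcal{F}(B^3) \;>\; 0.
\]
On the other hand, the Brunn--Minkowski inequality \eqref{BM3d} applied to the pair $(H(P), L)$, together with $\mathcal{F}(H(P)) = 0$, produces by the polarization $\mathcal{F}(H(P) + L) = \mathcal{F}(H(P)) + 2\mathcal{F}^{(1,1)}(H(P),L) + \mathcal{F}(L)$ the pointwise inequality $\mathcal{F}^{(1,1)}(H(P),L) \ge 0$ for every $L \in \K^3$. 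I then intend to apply this with a carefully chosen $L$ (for instance $L = C(-P,\theta')$ for small $\theta'$, or $L = D_Q$ for $Q$ orthogonal to $P$) together with the rigidity $E(P) = 0$ and $\int_{I_{\pi/2}(P)} f\, d\H^2 = -\pi f(-P)$ to show that $\mathcal{F}(H(-P))$ must in fact be $\le 0$, contradicting the displayed inequality above.

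The main obstacle is precisely this last step: the two integral identities obtained from the boundary limits of $(\ast)$ are not immediately sufficient to contradict $\mathcal{F}(B^3) > 0$. The key will be to exploit the \emph{full} strength of $(\ast)$---namely the ODE one obtains by differentiating in $\theta$, which relates circular averages of $f$ on parallels of the lower hemisphere to those on the upper hemisphere---together with $\mathcal{F}^{(1,1)}(H(P),L) \ge 0$ for a rich enough class of $L$, in order to propagate the vanishing from $H(P)$ to a region of convex bodies whose $\mathcal{F}$-values are used to write $\mathcal{F}(B^3)$ via the valuation property. Steps 1 and 2 (the two limits) are routine applications of continuity, but Step 3 will require the delicate combination just described.
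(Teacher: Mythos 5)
Your proposal openly acknowledges that the decisive step --- turning the boundary-limit identities and the nonnegativity $\mathcal{F}^{(1,1)}(H(P),\cdot)\ge 0$ into an actual contradiction --- is not carried out, and as sketched it does not close. The facts you extract, namely
$\int_{I_{\pi/2}(P)} f\,d\H^2 = -\pi f(-P)$, $E(P)=0$, the family of identities $(\ast)$, and $\mathcal{F}^{(1,1)}(H(P),L)\ge 0$ for all $L$, are a collection of integral and averaged constraints on $f$. They do not, by themselves, yield enough pointwise rigidity of $f$ on $I_{\pi/2}(P)$ to reach $\mathcal{F}(B^3)=0$. In particular, the positivity of the mixed term is one inequality per body $L$, and you do not exhibit any $L$ (or family of $L$'s) for which it forces $\mathcal{F}(H(-P))\le 0$; the ODE you allude to relates circular averages across hemispheres, but you give no mechanism that converts this into the required contradiction. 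The valuation identity $\mathcal{F}(H(-P))=\mathcal{F}(B^3)+\mathcal{F}(D_P)$ and both endpoint limits are correct as far as they go, but they are consequences, not a substitute for, the missing rigidity.

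The paper's proof extracts far more from the hypothesis $\mathcal{F}(C(P,\theta))=0$ for all $\theta$: it perturbs the cone by test functions $\phi$ supported in the cap, observes that $F(0)=0$ is a minimum of $F(s)=\mathcal{F}(K_s)$ so that $F'(0)=0$ and $F''(0)\ge 0$, combines the vanishing first variation (which after mollification gives $\mathrm{trace}(Q(f_k))=0$ on the cap) with Lemma \ref{propIV.2} applied to $-f_k$ to obtain $Q(-f_k)\ge 0$, and concludes $Q(f_k)=0$, hence $f_k$ (and then $f$) is the restriction of a \emph{linear} function on $I_{\pi/2}(P)$. That pointwise rigidity is what makes the final step elementary: after normalizing $f\equiv 0$ on $I_{\pi/2}(P)$ via Remark \ref{remark2}, Lemma \ref{lema S2} and $F(0)=0$ give $\int_{\Gamma_\theta(P)} f\,d\H^1=0$ for every $\theta$, hence $\int_{\s^2\setminus I_{\pi/2}(P)} f\,d\H^2=0$, and so $\mathcal{F}(B^3)=0$, contradicting Remark \ref{remark1}. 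Your plan would essentially need to reprove this affineness of $f$ on the upper hemisphere; without the variational argument and the Poincar\'e-type Lemma \ref{propIV.2}, the route you describe does not reach the contradiction.
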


\begin{proof}
We argue by contradiction. Assume that there exists some $P\in\s^2$
such that for all $\theta\in(0,\pi/2)$, we have $\mathcal{F}\left(C(P,\theta)\right)=0$.

Let us fix $\theta$, for the moment, and denote by $K$ the set $C(P,\theta)$ and by $h$ its support function. Then
$h\equiv1$ in $I_{\theta}(P)$.
Let $\phi\in C^{\infty}(\s^2)$ with support
contained in the  interior $I_\theta(P)^\circ$ of $I_{\theta}(P)$. By Proposition \ref{appendixprop1} there exists some
$\epsilon> 0$ such that
$h_s=h+s\phi$ is a support function for every $s\in [-\epsilon,\epsilon]$. Let $K_s$
denote the convex body whose support function is $h_s$
and $F(s)={\mathcal F}(K_s)$. Then $F(0)={\mathcal F}(K)=0$ and (\ref{ipopos}) yields that $F$
has a minimum at $s=0$.
Thus, if the derivatives of $F$ exist, then
\[
\mbox{$F'(0)=0\quad$ and $\quad F''(0)\geq 0$}.
\]
In order to obtain a suitable expression for the second derivative of $F$ at $s=0$, we first observe that
$$
F(s)=\int_{I_\theta(P)^\circ} f(u) \, \Ss(K_s,du)+\int_{\mathbb{S}^2\setminus I_\theta(P)^\circ} f(u) \, \Ss(K_s,du)\,.
$$
By the definition of $K_s$, in 
particular, since the support of $\phi$ is contained in $I_\theta(P)^\circ$, \cite[Theorem 1.7.4]{Schneider}  yields that $\tau(K_s,\omega)=\tau(C(P,\theta),\omega)$ for all Borel sets $\omega\subset \mathbb{S}^2\setminus I_\theta(P)^\circ$. 
Since area measures are locally defined (see \cite[p.~206]{Schneider}), we conclude that 
$\Ss(K_s,\cdot)=\Ss(C(P,\theta),\cdot)$ on  $\mathbb{S}^2\setminus I_\theta(P)^\circ$. Hence, Lemma \ref{lema S2} yields 
that 
$$
\int_{\mathbb{S}^2\setminus I_\theta(P)^\circ} f(u) \, \Ss(K_s,du)=\frac{\tan\theta}{2}\int_{\Gamma_\theta(P)}{f
\,d\H^1}.
$$
On $I_\theta(P)$, the support function $h(K_s,\cdot)$ of $K_s$ is of class $C^2$. 
Hence, by Remark \ref{lateremark} we 
get 
$$
\Ss(K_s,\omega)=\int_\omega \det \left(Q(h_s)\right)d\H^2\,,
$$
for all Borel sets $\omega\subset I_\theta(P)^\circ$. Since $\H^2(I_{\theta}(P)\setminus I_{\theta}(P)^\circ)=0$,  we finally arrive at
\begin{equation}\label{dim 3 F(s)}
F(s)=\int_{I_{\theta}(P)}{f
\det\left(Q(h_s)\right)d\H^2}+\frac{\tan\theta}{2}\int_{\Gamma_\theta(P)}{f
\,d\H^1}.
\end{equation}
From \eqref{dim 3 F(s)} we see that $F$ is indeed twice differentiable. Moreover,
 the second term is independent of $s$.
Using the definition of the cofactor matrix, it thus follows that
\begin{equation}\label{dim3 F's}
F'(s)=\int_{I_{\theta}(P)}{f
\sum_{i,j=1}^2 c_{ij}\left[Q(h_s)\right]q_{ij}(\phi)\,d\H^2}.
\end{equation}
Since $h=1$ in $I_{\theta}(P)$, the cofactor matrix of $Q(h,u)$ is the identity matrix,
for every $u\in I_\theta(P)$. This implies that
\begin{equation}\label{dim3 F'=0}
F'(0)=\int_{I_{\theta}(P)}f\,{\rm trace}(Q(\phi))\,d\H^2=0\,.
\end{equation}
Differentiating (\ref{dim3 F's}), we obtain for the second derivative of $F$ at $s=0$ the expression
\begin{equation}\label{dim3 F''}
F''(0)=2\int_{I_{\theta}(P)}{f\det\left(Q(\phi)\right)\,d\H^2}.
\end{equation}

Now we use the regularization argument described in the Appendix. Let
$(f_k)_{k\in\N}$ be the sequence of functions in  $C^{\infty}(\s^2)$,
converging uniformly to $f$ on $\s^2$, that is constructed in Lemma \ref{l:regularization}.
Let $C\subset\s^2$ be a compact set contained in the  interior of $I_\theta(P)$,
and let $\psi\in C^\infty(\s^2)$ be such that its support
is contained in $C$. Let $g(u)={\rm trace}(Q(\psi(u))$. It is clear that ${\rm
sprt}(g) \subseteq C$. Then
\begin{equation}\label{pippo}
\begin{split}
\int_{\s^2}{f_k(u)g(u)\,\H^2(du)}&=\int_{\s^2}{\left(\int_{{\bf O}(2)}{f(\rho
u)\omega_k(\rho)\,\nu(d\rho)}\right)g(u)\,\H^2(du)}\\
&=\int_{{\bf O}(2)}{\omega_k(\rho)}\int_{\s^2}{f(\rho u)g(u)\, \H^2(du)\,
\nu(d\rho)}\\
&=\int_{\{||\rho-{\rm id}||<\delta_k\}}{\omega_k(\rho)\left(\int_{\s^2}{f(\rho
u)g(u)\,\H^2(du)}\right)\,\nu(d\rho)}\,,
\end{split}
\end{equation}
where ${\rm id}$ is the identity element of ${\bf O}(2)$ and $\delta_k=\frac{1}{k^2}$
(see the definition of $\omega_k$ in the Appendix).
Fix $k\in\N$ and $\rho\in{\bf O}(2)$ such that $\|\rho-{\rm id}\|<\delta_k$, and let
$\psi_\rho$ be defined by $\psi_\rho(u)=\psi(\rho^{-1}(u))$ for $u\in \s^2$.
Then, by the rotation invariance of the Hausdorff measure $\H^2$ 
and Lemma \ref{appendixlemma2} we get
\[
\int_{\s^2}{f(\rho u)g(u)
\,\H^2(du)}=\int_{\s^2}{f(u)\,
{\rm trace}\left(Q(\psi_{\rho},u)\right)
\,\H^2(du)}\,.
\]
For sufficiently large $k$ (this depends on the choice of the set $C$, of course),
the support of $\psi_{\rho}$
is contained in $I_{\theta}(P)$, hence we may apply \eqref{dim3 F'=0}
with $\phi=\psi_{\rho}$ and get
\[
\int_{\s^2}{f(u)\,
{\rm trace}\left(Q(\psi_{\rho},u)\right)
\,\H^2(du)}=0\,.
\]
Thus, using (\ref{pippo}), we arrive at
$$
\int_{\s^2}{f_k(u)
{\rm trace}\left(Q(\psi,u)\right)
\,\H^2(du)}=0
$$
for every $\psi\in C^{\infty}(\s^2)$ with ${\rm
sprt}(\psi)\subseteq C$, and for sufficiently large $k$.
As $f_k$ is smooth we get, using integration by parts (that is, a special case of Lemma \ref{l:interch int}),
\begin{equation}\label{1. deriv 0+lema}
\int_{\s^2}{f_k(u)\,
{\rm trace}\left(Q(\psi,u)\right)
\,\H^2(du)}
=
\int_{\s^2}\psi(u)\,{\rm trace}(Q(f_k,u))\,\H^2(du)
=0\,.
\end{equation}
From \eqref{1. deriv 0+lema} (and the regularity of $f_k$) it follows  that
\begin{equation}\label{pippo4}
{\rm trace}(Q(f_k,u))=0\,,
\end{equation}
for all $u\in C$ and all  sufficiently large $k\in\N$.

\medskip

Performing the same argument now with
$g=\det(Q(\psi))$, and using Lemma \ref{appendixlemma2},
(\ref{dim3 F''}) and $F''(0)\ge0$, we obtain that
$$
\int_{\s^2}{f_k(u)\det
\left(Q(\psi,u)\right)\,\H^2(du)}\geq 0\,,
$$
for every $\psi \in C^{\infty}(\s^2)$ such that ${\rm
sprt}(\psi)\subseteq C$, and for all sufficiently large $k$.
In particular, for any fixed $\theta'\in (0,\theta)$, we may choose
$C=I_{\theta'}(P)$. For this choice of $C$ we now apply Lemma \ref{propIV.2}
to $-f_k$ and conclude that $Q(-f_k,u)$
is positive semi-definite for all $ u\in I_{\theta'}(P)$,
if $k$ is sufficiently large. But then (\ref{pippo4}) with $f_k$ replaced by $-f_k$ implies
that $Q(-f_k)=0=Q(f_k)$ in $I_{\theta'}(P)$. Now Corollary \ref{appendixcor2} shows that $f_k$ is
the restriction of a linear function to $I_{\theta'}(P)$ if  $k$ is large enough.
Letting $k$ tend to infinity, we obtain that the same conclusion holds for $f$ and,
as $\theta$ was arbitrary, we finally conclude that $f$ is linear on the hemisphere
$I_{\pi/2}(P)$. According to Remark \ref{remark2},
we may assume that $f=0$ in $I_{\pi/2}(P)$. Then,
\eqref{dim 3 F(s)} and $F(0)=0$ yield
$$
\int_{\Gamma_\theta(P)}{f \,d\H^1}=0\,,\quad\,\theta \in (0,\pi/2)\,.
$$
A suitable decomposition of spherical Lebesgue measure now implies that
$$
\int_{\s^2\backslash I_{\pi/2}(P)}{f \,d\H^2}=0.
$$
On the other hand, for the unit ball we have
\[
\F(B^3)=\int_{\s^2}{f\, d\H^2}=\int_{I_{\pi/2}(P)}{f\,
d\H^2}+\int_{\s^2\backslash I_{\pi/2}(P)}{f \,d\H^2}=0\,,
\]
in contradiction to Remark \ref{remark1}.
\end{proof}

\begin{prop}\label{prop int f det<0}
Let $f\in C(\mathbb{S}^{n-1})$. 
Assume that $\F$ is not identically zero and satisfies (\ref{ipopos}) and (\ref{BM3d}). Then, for each point
$P\in \s^2$ there is some $\theta\in(0,\pi/2)$ such that, for all
$\phi\in C^{2}(\s^2)$ with support contained in the  interior of $I_{\theta}(P)$, we have
$$
\int_{\s^2}{f(u) \det\left(Q(\phi,u)\right)\,\H^2(du)}\leq
0.
$$
\end{prop}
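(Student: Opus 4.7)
The strategy replaces the odd-function cancellation of Section \ref{dim3dss2} by an ``inflation'' device: instead of perturbing the bare cone $C(P,\theta)$, I perturb a Minkowski sum of $C(P,\theta)$ with a large scaling $RL$ of an auxiliary cone $L$ whose support function vanishes on $I_\theta(P)$. The crucial feature is that this inflation keeps the first and second variations of $\F$ in the direction $\phi$ unchanged while driving $\F$ of the base body to $+\infty$, so the discriminant bound $F''(0) \leq F'(0)^2/(2F(0))$ coming from the concavity of $\sqrt{\F}$ degenerates in the limit to the desired $F''(0) \leq 0$.

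I first apply Lemma \ref{techlemma3} at the antipodal point $-P$ to obtain some $\theta'_0 \in (0,\pi/2)$ with $\F(C(-P,\theta'_0)) > 0$, and set $L := C(-P,\theta'_0)$. Picking any $\theta \in (0,\pi/2-\theta'_0]$, for $y \in I_{\theta'_0}(-P)$ and $u \in I_\theta(P)$, writing $y = -\cos\alpha P + \sin\alpha e_y$, $u = \cos\beta P + \sin\beta e_u$ with $\alpha \in [0,\theta'_0]$, $\beta \in [0,\theta]$, $e_y, e_u \in P^{\perp} \cap \s^2$, one computes
\begin{equation*}
(y,u) = -\cos\alpha\cos\beta + \sin\alpha\sin\beta(e_y,e_u) \leq -\cos(\alpha+\beta) \leq -\cos(\theta+\theta'_0) \leq 0;
\end{equation*}
since $0 \in L$, this gives $h_L \equiv 0$ on $I_\theta(P)$. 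Setting $K_R := C(P,\theta) + RL$, one has $h_{K_R} \equiv 1$ on $I_\theta(P)$, and by multilinearity of the area measure
\begin{equation*}
\F(K_R) = \F(C(P,\theta)) + 2R \int_{\s^2} f\, d\mathrm{S}(C(P,\theta), L, \cdot) + R^2 \F(L) \longrightarrow +\infty \qquad (R \to \infty),
\end{equation*}
since $\F(L) > 0$.

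Now fix $\phi \in C^2(\s^2)$ with support in the interior of $I_\theta(P)$. On the interior of $I_\theta(P)$ the support function $h_{K_R}$ is $C^2$ with $Q(h_{K_R}) = I$, so Proposition \ref{appendixprop1} yields some $\epsilon > 0$ (which can be taken independent of $R$, since it depends only on $\|Q(\phi)\|_\infty$ and on the $C^2$-regularity of $h_{K_R}$ on a fixed neighbourhood of $\mathrm{supp}\,\phi$) such that $h_{R,s} := h_{K_R} + s\phi$ is the support function of a convex body $K_{R,s}$ for every $s \in [-\epsilon,\epsilon]$. Combining Remark \ref{lateremark} (which gives $d\Ss(K_{R,s},\cdot) = \det Q(1 + s\phi)\,d\H^2$ on the interior of $I_\theta(P)$, where $h_{R,s} = 1 + s\phi$ is $C^2$ with positive definite $Q$) with the locality of area measures on the open complement of $\mathrm{supp}\,\phi$ (as used in the proof of Lemma \ref{techlemma3}), and noting that on $\mathrm{supp}\,\phi$ one has $\Ss(K_R,\cdot) = \H^2$ because $h_{K_R} \equiv 1$ there, a direct splitting of $\F(K_{R,s}) = \int_{\s^2} f\,d\Ss(K_{R,s},\cdot)$ yields the exact quadratic identity
\begin{equation*}
F_R(s) := \F(K_{R,s}) = \F(K_R) + s \int_{\s^2} f\,{\rm tr}\,Q(\phi)\,d\H^2 + s^2 \int_{\s^2} f\,\det Q(\phi)\,d\H^2,
\end{equation*}
whose coefficients of $s$ and $s^2$ do not depend on $R$.

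By Lemma \ref{concavity F in lambda}, $\sqrt{F_R}$ is concave on $[-\epsilon,\epsilon]$. For $R$ sufficiently large, $F_R(0) = \F(K_R)$ dominates the $R$-independent perturbation terms, so $F_R > 0$ throughout $[-\epsilon,\epsilon]$; since for a positive quadratic polynomial the concavity of its square root on the interval of positivity is equivalent to the non-negativity of its discriminant, I obtain
\begin{equation*}
\int_{\s^2} f\,\det Q(\phi)\,d\H^2 \leq \frac{\bigl(\int_{\s^2} f\,{\rm tr}\,Q(\phi)\,d\H^2\bigr)^{2}}{4\,\F(K_R)}.
\end{equation*}
Letting $R \to \infty$, the right-hand side tends to $0$ (its numerator is fixed, while $\F(K_R) \to \infty$), so the desired inequality $\int_{\s^2} f\,\det Q(\phi)\,d\H^2 \leq 0$ follows. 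The main obstacle---namely, that the discriminant bound from a single-body one-parameter perturbation has a non-negative right-hand side proportional to $F'(0)^2/F(0)$, which by itself does not force $F''(0) \leq 0$---is overcome by inflating $F(0)$ via the Minkowski summand $RL$, whose support function vanishes exactly on the region where $\phi$ acts, so that the numerator and the $s^2$-coefficient of $F_R$ remain unaffected.
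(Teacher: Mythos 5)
Your proof is correct, and it takes a genuinely different (though structurally parallel) route from the paper's. Both proofs begin identically: apply Lemma \ref{techlemma3} at the antipode of $P$ to find a cone $L:=C(-P,\theta'_0)$ with $\mathcal{F}(L)>0$, observe that $h_L\equiv 0$ on $I_\theta(P)$ once $\theta+\theta'_0\le\pi/2$, perturb a base body whose support function is constant and $C^2$ on $I_\theta(P)$, split $\mathcal{F}$ into the part over $I_\theta(P)^\circ$ (computed via Remark \ref{lateremark} and the locality of area measures) and the $s$-independent part over the complement, and finally exploit the discriminant form $2F(0)F''(0)-F'(0)^2\le 0$ of the concavity of $\sqrt F$. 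Where the two arguments diverge is in the choice of base body and of the auxiliary limiting parameter. The paper takes the base body $C(\overline P,\overline\theta)+\eta B^3$, whose support function equals $\eta$ on $I_\theta(P)$; then $F'_\eta(0)=\eta\int f\,\mathrm{tr}\,Q(\phi)$ and $F_\eta(0)\to\mathcal{F}(C(\overline P,\overline\theta))>0$, so letting $\eta\to 0^+$ makes the first-variation term vanish while $F(0)$ stays bounded away from zero. You instead take the base body $C(P,\theta)+R\,L$, whose support function equals $1$ on $I_\theta(P)$; the coefficients of $s$ and $s^2$ in $F_R$ are $R$-independent, while $F_R(0)=\mathcal{F}(K_R)\to\infty$ because the $R^2\mathcal{F}(L)$ term dominates. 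After rescaling by $R^{-1}$ your construction morally reduces to the paper's with $B^3$ replaced by $C(P,\theta)$, but the way you organize the limit is cleaner: $F_R(s)$ is exactly the fixed quadratic $bs+cs^2$ shifted by an $R$-dependent constant, so only one quantity moves in the limit, whereas in the paper all three coefficients $F_\eta(0),F'_\eta(0),F''_\eta(0)$ carry $\eta$-dependence through the $\det(\eta I+sQ(\phi))$ expansion. A further advantage of your version is that $\epsilon$ in Proposition \ref{appendixprop1} is manifestly uniform in $R$ (since $Q(h_{K_R})=I$ on $I_\theta(P)$ for all $R$), whereas the paper's $\epsilon$ depends on $\eta$; this does not affect the correctness of either argument, because the concavity bound is applied only at $s=0$, but it makes your uniformity claim transparent rather than something the reader must supply.
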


\begin{proof}
Let $P\in\s^2$ be given.
Let $\overline{P}$ be the antipodal point of $P$, and let $\overline{\theta}\in
(0,\pi/2)$ be such that $\F(C(\overline{P},\overline{\theta}))>0$. By Lemma \ref{techlemma3}
the existence of $\overline{\theta}$ is ensured. Now we define  $\theta:=\frac{\pi}{2}-\overline{\theta}$ and
$\Omega:=I_\theta(P)^\circ$.
Let $h$
denote the support function of $C(\overline{P},\overline{\theta})$.
Clearly, $h\equiv 0$ in $\Omega$. We now consider $\phi\in C^{2}(\s^2)$ with support contained in $\Omega$.

Let $\eta>0$ and define the convex body $K_\eta$   by
$$
K_\eta:=C(\overline{P},\overline{\theta})+\eta B^3\,.
$$
Let $h_\eta$ be the support function of
$K_\eta$. Then, for $u\in\Omega$,
$$
h_\eta(u)=h(u)+\eta h_{B^3}(u)=\eta\,.
$$
Moreover, the assumptions of Proposition \ref{appendixprop1} are fulfilled
by $h_\eta$ and $\phi$, hence there exists some $\epsilon>0$ (which may depend on $\eta$ as well)
such that for every $s\in[-\epsilon,\epsilon]$ the function $\hat h_{\eta,s}:=h_\eta+s\phi$ is
the support function of a convex body $\hat K_{\eta,s}$. Let $F_\eta\,:\,[-\epsilon,\epsilon]\to\R$
be defined by $F_\eta(s)={\mathcal F}(\hat K_{\eta,s})$.

Arguing as in the derivation of \eqref{dim 3 F(s)}, we obtain that
\begin{eqnarray*}
F_\eta(s)&=&\int_{\s^2}f(u)\,\Ss(\hat K_{\eta,s},du)\\
&=&\int_{\Omega}f(u)\,\det(\eta I+sQ(\phi,u))\,\H^2(du)
+
\int_{\s^2\setminus\Omega}f(u)\,\Ss(K_\eta,du)\,,
\end{eqnarray*}
where $I$ denotes the identity matrix in ${\mathcal S}_2$ and where we used the fact that 
$\hat h_{\eta,s}$ is of class $C^2$ on $\Omega$. The second integral does not depend on $s$. As
$$
\det(\eta I+sQ(\phi,u))=
\eta^2+s\eta\,{\rm trace}(Q(\phi,u))+
s^2\det(Q(\phi,u))\,,
$$
we get
\begin{equation}\label{derprima}
F_\eta'(0)=\eta\int_{\Omega}
{f(u)\,{\rm trace}(Q(\phi,u))\,
\H^2(du)}
\end{equation}
and
\begin{equation}\label{derseconda}
F_\eta''(0)=2\,\int_{\Omega}
{f(u)\,\det\left(Q(\phi,u)\right)\,\H^2(du)}
=2\,\int_{\mathbb{S}^2}
{f(u)\,\det\left(Q(\phi,u)\right)\,\H^2(du)}.
\end{equation}
Moreover, since $K_\eta$ tends to $C(\overline P,\overline\theta)$ in the Hausdorff distance as
$\eta\to0^+$ and $\mathcal F$ is continuous,
\begin{equation}\label{valfunzione}
F_\eta(0)={\mathcal F}(K_\eta)\rightarrow{\mathcal F}(C(\overline P,\overline\theta))>0
\quad\mbox{as $\eta\to0^+$.}
\end{equation}
In particular, we thus see that $F_\eta$ is twice differentiable and $F_\eta(0)$ is bounded from 
below by a positive constant independent of $\eta$  if $\eta>0$ is sufficiently small.
By Lemma \ref{concavity F in lambda} we know that
$\sqrt {F_\eta}$ is concave, and therefore
\begin{equation}\label{2FF''-F'<0}
2F_\eta(0) F_\eta''(0)-\left(F_\eta'(0)\right)^2\leq 0\,.
\end{equation}
Now the assertion  follows by plugging (\ref{derprima}), (\ref{derseconda}) and (\ref{valfunzione}) into
(\ref{2FF''-F'<0}) and letting $\eta\to0^+$.
\end{proof}

The proof of the following result is implicit in the argument for Lemma \ref{techlemma3} and thus is based
on the regularization argument
contained in Lemma \ref{l:regularization}.

\begin{lema}
\label{lmIV.2} Let $f\in\c(\s^2)$, let $P\in \s^2$ and $\theta\in(0,\pi/2)$.
Assume that
\begin{equation}\label{condizfinale}
\int_{\s^2}{f(u) \det\left(Q(\phi,u)\right)\,\H^2(du)}\leq 0
\end{equation}
holds for all $\phi\in C^2(\s^2)$
with  support contained in $I_{\theta}(P)$. Let $\theta'\in(0,\theta)$.
Then there exists a sequence of functions
$(f_k)_{k\in\N}$ in $\c^\infty(\s^2)$, which converges uniformly to $f$
on $\s^2$, such that for all $k\in\N$, (\ref{condizfinale})
holds with $f$ replaced by $f_k$ and for all $\phi\in C^2(\s^2)$
with  support contained in $I_{\theta'}(P)$.
\end{lema}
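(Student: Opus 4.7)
The plan is to reuse the mollification procedure exploited in the proof of Lemma \ref{techlemma3}, namely the rotation averaging from Lemma \ref{l:regularization}. Concretely, I would set
\[
f_k(u):=\int_{{\bf O}(n)}f(\rho u)\,\omega_k(\rho)\,\nu(d\rho),\qquad u\in\s^2,
\]
where $\omega_k$ is the smooth mollifier on the rotation group supported in $\{\rho:\|\rho-\mathrm{id}\|<\delta_k\}$ with $\delta_k=1/k^2$. Lemma \ref{l:regularization} then guarantees $f_k\in C^\infty(\s^2)$ and $f_k\to f$ uniformly on $\s^2$.

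The core calculation is to express the relevant integral for $f_k$ by interchanging integrations and then transporting the test function by $\rho^{-1}$. For $\phi\in C^2(\s^2)$ with $\mathrm{sprt}(\phi)\subseteq I_{\theta'}(P)$, Fubini together with the rotation invariance of $\H^2$ and Lemma \ref{appendixlemma2} (which says $\det(Q(\phi,\rho^{-1}v))=\det(Q(\phi_\rho,v))$ for $\phi_\rho(v):=\phi(\rho^{-1}v)$) yields
\[
\int_{\s^2}f_k(u)\det(Q(\phi,u))\,\H^2(du)
=\int_{{\bf O}(n)}\omega_k(\rho)\left(\int_{\s^2}f(v)\det(Q(\phi_\rho,v))\,\H^2(dv)\right)\nu(d\rho).
\]
This is precisely the same manipulation used in \eqref{pippo} of the proof of Lemma \ref{techlemma3}, only with $g=\det(Q(\phi))$ instead of $g=\mathrm{trace}(Q(\phi))$ or $g=\det(Q(\psi))$ there.

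The decisive point is a support argument: since $\mathrm{sprt}(\phi)$ is a compact subset of the open cap $I_{\theta'}(P)^{\circ}$ and $\theta'<\theta$, a standard compactness/continuity argument on ${\bf O}(n)$ produces some $\eta>0$ such that $\rho(I_{\theta'}(P))\subseteq I_\theta(P)$ for every $\rho$ with $\|\rho-\mathrm{id}\|<\eta$. Choosing $k_0\in\N$ so that $\delta_k<\eta$ for $k\ge k_0$, we obtain $\mathrm{sprt}(\phi_\rho)\subseteq I_\theta(P)$ for every $\rho$ in the support of $\omega_k$ with $k\ge k_0$. Since $\phi_\rho\in C^2(\s^2)$, hypothesis (\ref{condizfinale}) applied to $\phi_\rho$ gives $\int_{\s^2}f(v)\det(Q(\phi_\rho,v))\,\H^2(dv)\le 0$. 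As $\omega_k\ge 0$, plugging this into the displayed identity yields $\int_{\s^2}f_k\det(Q(\phi))\,\H^2\le 0$ for all $k\ge k_0$.

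Finally, to obtain the conclusion for \emph{every} $k\in\N$ (as the statement requires) one just relabels the tail: replace $(f_k)_{k\in\N}$ by $(f_{k+k_0-1})_{k\in\N}$, which is still a smooth sequence converging uniformly to $f$. The main obstacle is genuinely the support compatibility between $\phi$ (supported in $I_{\theta'}(P)$) and $\phi_\rho$ (supported in $I_\theta(P)$ for small enough rotations); everything else is a direct transcription of the scheme already executed in the proof of Lemma \ref{techlemma3}, and this is exactly why the statement of Lemma \ref{lmIV.2} must use the strictly smaller cap $I_{\theta'}(P)$ in order to accommodate the mollification radius.
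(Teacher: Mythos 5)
Your proposal is correct and is essentially the paper's own proof: the paper explicitly states that Lemma \ref{lmIV.2} follows from the regularization scheme already executed in the proof of Lemma \ref{techlemma3}, using the same mollification $f_k$ from Lemma \ref{l:regularization}, the same Fubini-plus-rotation-invariance manipulation, and Lemma \ref{appendixlemma2} for $g=\det(Q(\phi))$. You have also correctly identified and spelled out the one point the paper leaves implicit, namely that the gap $\theta-\theta'>0$ is exactly what makes room for the mollification radius $\delta_k$, so that $\mathrm{sprt}(\phi_\rho)\subseteq I_\theta(P)$ for all $\rho$ in $\mathrm{sprt}(\omega_k)$ once $k$ is large.
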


\subsection{Proof of Theorem \ref{th dim 3}}\label{dim3dss4}
Let the assumptions of Theorem \ref{th dim 3} be fulfilled.
Our aim is to prove that for each $P\in \s^2$ there is some $\theta\in (0,\pi/2)$
such that for all $\phi\in C^2(\s^2)$ with support contained $I_{\theta}(P)$ the inequality
(\ref{condizfinale}) holds. This will prove Theorem \ref{th dim 3}. Indeed, if this is established
and $P\in \s^2$ is given, let $\theta$ be chosen correspondingly. By Lemma \ref{lmIV.2} there is
a sequence of functions
$(f_k)_{k\in\N}$ in $\c^\infty(\s^2)$, which converges uniformly to $f$
on $\s^2$, such that for all $k\in\N$, (\ref{condizfinale})
holds with $f$ replaced by $f_k$ and for all $\phi\in C^2(\s^2)$
with  support contained in $I_{\theta'}(P)$, where (say) $\theta':=\theta/2$. But then Lemma
\ref{propIV.2} implies that $Q(f_k,u)$ is positive semi-definite for all $u\in I_{\theta'}(P)$ and all $k\in\N$.

By Corollary \ref{appendixcor1}, this shows that the 1-homogeneous extension of $f_k$
is convex in the interior of the cone spanned by $I_{\theta'}(P)$, for every $k$.
The same must then be true for the 1-homogeneous
extension of $f$. In particular, we thus conclude that the 1-homogeneous
extension of $f$ is locally convex on $\R^3\setminus\{o\}$. By a classical result
due to Tietze (see \cite[Theorem 2]{SSV} for a more general result), applied to the epigraph of $f$, it follows that 
the 1-homogeneous
extension of $f$ is  convex on every convex subset of $\R^3\setminus\{o\}$. 
But this easily yields the convexity of the 1-homogeneous
extension of $f$, and thus $f$ is the support function of a convex body.

\medskip

\noindent
{\it Case 1: ${\mathcal F}\ge0$ on convex bodies of class $C^2_+$.} We further divide the treatment of this
case into two subcases. Assume first that there exists a convex body $K\in\K^3$ of class $C^2_+$ such that
${\mathcal F}(K)=0$. Let $h$ be the support function of $K$ and let $\phi\in C^2(\s^2)$.
According to Proposition \ref{appendixprop1} there exists some $\epsilon>0$ such that the function
$h_s=h+s\phi$ is the support function of a convex body $K_s$ of class $C^2_+$, for every $s\in
[-\epsilon,\epsilon]$.
Let $F$ be defined on $[-\epsilon,\epsilon]$ by $F(s)=
{\mathcal F}(K_s)$. Then, in particular, $F$ is twice differentiable and $F$ has a minimum at $s=0$, so that
$F'(0)=0$. Moreover, by (\ref{0.3b}), there exists
a sequence $(s_k)_{k\in\N}$, converging to $0$, such that $F(s_k)=0$ for
every $k\in\N$. Indeed, if on the contrary $F(s)>0$ for every $s\in[-\delta,\delta]\setminus\{0\}$,
for some $\delta\in(0,\epsilon]$, then
\begin{eqnarray*}
0&=&F(0)={\mathcal F}(K)={\mathcal F}\left(\frac12 K_{-\delta}+\frac12 K_{-\delta}\right)\\
&\ge&
\min\{{\mathcal F}(K_{-\delta}),{\mathcal F}(K_{\delta})\}=
\min\{F(-\delta),F(\delta)\}>0\,,
\end{eqnarray*}
which is  a contradiction. Then (as $F'(0)=0$)
$$
F''(0)=2\,\lim_{s\to0}\frac{F(s)-F(0)}{s^2}=2\, \lim_{k\to\infty}\frac{F(s_k)-F(0)}{s^2}=0\,.
$$
On the other hand, by (\ref{I.0a}) we have
$$
F(s)=\int_{\s^2}f(u)\det(Q(h,u)+sQ(\phi,u))\,\H^2(du)\,,
$$
whence, differentiating twice, we conclude that
$$
0=F''(0)=2\,\int_{\s^2}f(u)\det(Q(\phi,u))\,\H^2(du)\,,
$$
i.e.~(\ref{condizfinale}) follows with equality for any $\phi\in C^2(\s^2)$. (In fact, this implies that
$f$ is linear, and thus the support function of a point.)

Assume now that ${\mathcal F}(K)>0$ for every $K\in\K^3$ of class $C^2_+$ .
In this case we aim to prove that the Brunn--Minkowski type inequality (\ref{0.3bb}) implies the
stronger inequality (\ref{BM3d}). Then, according to Proposition \ref{prop int f det<0},
we have (\ref{condizfinale}) in the form required. It is sufficient to prove (\ref{BM3d}) when
the involved bodies are of class $C^2_+$;
the general case follows, since convex bodies of class $C^2_+$ are dense in $\K^3$ and
${\mathcal F}$ is continuous. Let $K_0, K_1\in\K^3$ be of class $C^2_+$ and let
$t\in[0,1]$; moreover, define
$$
\bar K_0=\frac{1}{\mathcal F(K_0)^{1/2}}\,K_0\,,\quad \bar
K_1=\frac{1}{\mathcal F(K_1)^{1/2}}\,K_1\,,\quad
$$
and
\[
\bar t=\frac{t\mathcal F(K_1)^{1/2}}{(1-t)\mathcal F(K_0)^{1/2}+t\mathcal
F(K_1)^{1/2}}\,.
\]
If we apply inequality (\ref{0.3bb}) to $\bar K_0$, $\bar K_1$ and $\bar t$,
we get inequality (\ref{BM3d}) for $K_0$, $K_1$ and $t$.

\medskip

\noindent
{\it Case 2: there exists some $K\in\K^3$ of class $C^2_+$ such that ${\mathcal F}(K)<0$.}
As above, let $h$ be the support function of $K$ and let $\phi\in C^2(\s^2)$;
there exists some $\epsilon>0$ such that the function
$h_s=h+s\phi$ is the support function of a convex body $K_s$ for every $s\in
[-\epsilon,\epsilon]$. Let $F$ be defined in $[-\epsilon,\epsilon]$ by $F(s)=
{\mathcal F}(K_s)$. As $F(0)<0$, we may assume that $F(s)<0$ for every $s\in[-\epsilon,\epsilon]$.
Let $s_0,s_1\in[-\epsilon,\epsilon]$ and $t\in[0,1]$,
and define $K_0=K_{s_0}$ and $K_1=K_{s_1}$; moreover, setting
$$
\bar K_0=\frac{1}{\sqrt{{-\mathcal F}(K_0)}}\,K_0\,,\quad
\bar K_1=\frac{1}{\sqrt{{-\mathcal F}(K_1)}}\,K_1\,,
$$
and
\[
\bar t=
\frac{t\sqrt{{-\mathcal F}(K_1)}}
{(1-t)\sqrt{{-\mathcal F}(K_0)}+t\sqrt{{-\mathcal F}(K_1)}}\,,
\]
we get, from (\ref{0.3bb}), that
\begin{equation*}
\sqrt{-{\mathcal F}((1-t)K_0+tK_1)}\leq
(1-t) \sqrt{-{\mathcal F}(K_0)}
+t\sqrt{-{\mathcal F}(K_1)}\,.
\end{equation*}
Hence, $\sqrt{-F}$ is convex and therefore also
 $-F$ is convex in $[-\epsilon,\epsilon]$, so that
$F''(0)\le0$. On the other hand, as above we have
$$
F''(0)=2\,\int_{\s^2}f(u)\det(Q(\phi,u))\,\H^2(du)\,,
$$
and therefore, in this remaining case, we have proved (\ref{condizfinale}) for an arbitrary
function $\phi\in C^2(\s^2)$.

\section{Proof of Theorem \ref{gral statement}}\label{sV}

The theorem will be proved by induction over the dimension $n$.
The proof in the case $n=3$ has already been given in Section \ref{s: 3 dim case}.

For the induction step, we assume that the result has already been proved
in $\R^n$ for some $n\ge 3$. Let $f:\mathbb{S}^n\subset\R^{n+1}\to\R$ be a
continuous function such that the associated functional $\mathcal{F}$
defined as in (\ref{0.1}) satisfies (\ref{0.3b}).

Clearly, $f$ is the support function of a convex body  if  for any
hyperplane
$H\subset\R^{n+1}$ passing through the origin  the restriction  of $f$ to $H$ is the support
function of a convex body. Let  $H$ be such a hyperplane, and let $e\in \mathbb{S}^n$ be orthogonal to $H$.
For a convex body ${K}\subset H$ and $\lambda> 0$, we define
$$
Z({K},\lambda):=K+\{se:0\le s\le \lambda\}\,.
$$
Thus $Z({K},\lambda)$ is an orthogonal cylinder with bases $K$ and $K+\lambda e$  and of height $\lambda$.
For ${K}_0, {K}_1\subset H$, $\lambda> 0$ and $t\in
[0,1]$, it is easy to check that
$$
Z((1-t) {K}_0+t {K}_1,\lambda)=(1-t)Z( {K}_0,\lambda)+tZ( {K}_1,\lambda)\,.
$$

Subsequently, we denote by $\delta_a$ the Dirac measure with unit point mass at $a\in\R^{n+1}$. Further,
for a convex body $K\subset H$, we denote by ${\rm S}^H_{n-1}(K,\cdot)$   the area measure of $ K$ with
respect to $H$ as ambient space. Using this notation, we now describe a suitable
decomposition of the area measure of $Z(K,\lambda)$.

\begin{lema}\label{areameasurecylinder} Let
${K}\subset H$ be a convex body and $\lambda > 0$.  Then
\begin{equation}\label{misuraareacilindri}
{\rm S}_n(Z({K},\lambda),\cdot)=\mathcal{H}^n({K})\cdot \delta_{-e}(\cdot)
+ \mathcal{H}^n({K})\cdot \delta_{e}(\cdot)+ \lambda\cdot
{\rm S}^H_{n-1}( {K},\cdot\cap H)\,.
\end{equation}
\end{lema}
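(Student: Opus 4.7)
The plan is to compute $\mathrm{S}_n(Z(K,\lambda),\cdot)$ directly from the definition $\mathrm{S}_n(Z(K,\lambda),\omega)=\H^n(\tau(Z(K,\lambda),\omega))$, by analyzing the boundary structure of the cylinder and the outer unit normals at each boundary point.

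First I would decompose the boundary. Writing $\partial_H K$ for the relative boundary of $K$ inside $H$, the set $\partial Z(K,\lambda)$ splits into three natural pieces: the bottom face $F_-=K$ with outer unit normal $-e$ at every relatively interior point, the top face $F_+=K+\lambda e$ with outer unit normal $e$, and the lateral surface $F_{\mathrm{lat}}=\partial_H K+[0,\lambda]\,e$. At a point $x+s\,e\in F_{\mathrm{lat}}$ with $0<s<\lambda$ and $x$ a regular point of $\partial_H K$, the outer unit normal of $Z(K,\lambda)$ lies in $H\cap\s^n$ and coincides with the outer unit normal to $K$ at $x$ computed with respect to the ambient space $H$. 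The two rims $\partial_H K$ and $\partial_H K+\lambda e$ are at most $(n-1)$-dimensional in $\R^{n+1}$, hence $\H^n$-null.

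Next, for a Borel set $\omega\subset\s^n$ I would write
\[
\omega=\omega_{-}\cup\omega_{+}\cup\omega_{H}\cup\omega',
\]
with $\omega_{\pm}:=\omega\cap\{\pm e\}$, $\omega_H:=\omega\cap(H\cap\s^n)$, and $\omega':=\omega\setminus(\{-e,e\}\cup H)$, and then compute $\H^n(\tau(Z(K,\lambda),\cdot))$ on each piece. For $\omega_-$, the reverse image $\tau(Z(K,\lambda),\{-e\})$ equals $F_-=K$ up to $\H^n$-null rim points, giving the contribution $\H^n(K)\cdot\delta_{-e}(\omega)$; the case $\omega_+$ is symmetric and yields $\H^n(K)\cdot\delta_{e}(\omega)$. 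For $\omega_H$, the reverse image is, up to $\H^n$-null rim points, the set $\tau^H(K,\omega_H)+[0,\lambda]\,e$, where $\tau^H$ denotes the analogous reverse spherical image computed inside $H$; a Fubini-type argument with respect to the orthogonal decomposition $\R^{n+1}=H\oplus\R e$ then gives $\H^n$-measure $\lambda\cdot\H^{n-1}(\tau^H(K,\omega_H))=\lambda\cdot\mathrm{S}^H_{n-1}(K,\omega_H)$.

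Finally, for $\omega'$, any $u\in\omega'$ has both a non-trivial $H$-component and a non-trivial $e$-component; such a vector can occur as an outer unit normal of $Z(K,\lambda)$ only at a point of the rim $\partial_H K\cup(\partial_H K+\lambda e)$, which is $\H^n$-null, so this piece contributes zero. Summing the four contributions yields the claimed identity. The main obstacle is the Fubini step for the lateral surface: one has to verify that, up to $\H^n$-null sets, $\tau(Z(K,\lambda),\omega_H)$ truly is the product $\tau^H(K,\omega_H)\times[0,\lambda]$ under the identification $\R^{n+1}=H\oplus\R e$, and this rests on the observation that at a relatively interior lateral point the outer normal cone of $Z(K,\lambda)$ in $\R^{n+1}$ agrees with the outer normal cone of $K$ inside $H$.
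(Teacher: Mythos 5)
Your proposal is correct and follows essentially the same route as the paper: decompose $\partial Z(K,\lambda)$ into the two faces, the lateral surface, and the $\H^n$-null rims, identify the normal cone at each type of boundary point, and read off the three contributions via the definition of the area measure. The paper simply compresses the measure-theoretic bookkeeping (the splitting of $\omega$, the Fubini step for the lateral part) into the closing remark ``this description of the normal cones easily leads to the formula,'' whereas you spell those steps out explicitly.
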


\begin{proof}
Let $K^\circ$ be the relative interior  and  $\partial_r { K}$  the
relative boundary of $K$ with respect to $H$. The disjoint union of
$K^\circ$, $K^\circ+\lambda e$ and the lateral surface $\partial_r { K}+\{se : 0<s<\lambda\}=:\Sigma_L$ covers
$\partial Z( K,\lambda)$ up to a set of
$\H^2$ measure zero. For  $x\in K^\circ$  we have
${\rm nor}(Z( K,\lambda), x)=\{-e\}$, and for $x\in K^\circ +\lambda e$ we have ${\rm nor}(Z( K,\lambda), x)=\{e\}$.
If $x\in\Sigma_L$, then ${\rm nor}(Z( K,\lambda), x)={\rm nor}^H({ K}, x')$,
where $x'$ is the orthogonal projection of $x$ to $H$ and ${\rm nor}^H({ K},x')$ denotes the set of exterior
unit normal vectors of $K$ at $x'$ with respect to
$H$ as ambient space. According to the definition of area measures, this description of the normal
cones easily leads to (\ref{misuraareacilindri}).
\end{proof}

\medskip

We now turn to the induction step.
Let $\bar{f}:=f|_H$ and $ \mathbb{S}^{n-1}_H:=\mathbb{S}^n\cap
H$. Let $K\subset H$ be a convex body and $\lambda>0$. The preceding lemma yields that
$$
\int_{\mathbb{S}^n} f(u)\,
{\rm S}_n(Z( K,\lambda),du)=[f(e)+f(-e)]\mathcal{H}^n({K})+
\lambda\int_{\mathbb{S}^{n-1}_H}\bar{f}(u)\,{\rm S}^H_{n-1}(K,du).
$$
For arbitrary convex bodies ${K}_0,{K}_1\subset H$, $\lambda>0$ and $t\in [0,1]$ we thus obtain

\begin{align*}
&\mathcal{F}((1-t)Z({K}_0,\lambda)+tZ({K}_1,\lambda))
=\mathcal{F}(Z((1-t){K}_0+t{K}_1,\lambda))\\
&\qquad=\mathcal{H}^n((1-t){K}_0+t{K}_1)\cdot[f(e)+f(-e)]\\
&\qquad\qquad\, +
\lambda\cdot \int_{\mathbb{S}^{n-1}_H}\bar{f}(u)\,{\rm S}^H_{n-1}((1-t){K}_0+t{K}_1,du)\\
&\qquad\ge \min\left\{\mathcal{F}(Z({K}_0,\lambda)),\mathcal{F}(Z({K}_1,\lambda))\right\}\\
&\qquad=\min\left\{\mathcal{H}^n({K}_0)\cdot[f(e)+f(-e)]+
\lambda\cdot \int_{\mathbb{S}^{n-1}_H}\bar{f}(u)\,{\rm S}^H_{n-1}({K}_0,du), \right.\\
&\qquad\qquad\qquad\qquad\left.\mathcal{H}^n({K}_1)\cdot[f(e)+f(-e)]+
\lambda\cdot \int_{\mathbb{S}^{n-1}_H}\bar{f}(u)\,
{\rm S}^H_{n-1}({K}_1,du)\right\}.
\end{align*}
If we divide by $\lambda$ and let $\lambda\to \infty$, we deduce that
\begin{align*}
&\int_{\mathbb{S}^{n-1}_H}\bar{f}(u)\,{\rm S}^H_{n-1}((1-t){K}_0+t{K}_1,du)\\
&\qquad\ge\min\left\{\int_{\mathbb{S}^{n-1}_H}\bar{f}(u)\,{\rm S}^H_{n-1}({K}_0,du)\,,\,
\int_{\mathbb{S}^{n-1}_H}\bar{f}(u)\,{\rm S}^H_{n-1}({K}_1,du)
\right\}\,.
\end{align*}
Since ${K}_0,{K}_1\subset H$ can be chosen
arbitrarily, the inductive hypothesis can be applied to the functional
defined on convex bodies contained in $H$, generated by the function $\bar f$,
and this yields that $\bar{f}$ is convex
in $\mathbb{S}^{n-1}_H$.

\appendix

\section{}

\subsection{Mollification}\label{s: reg arg}
We recall a standard method to approximate continuous functions on the unit sphere
by smooth functions.

Let $\xi:\R\to[0,\infty)$ be a function of class $C^\infty$ with ${\rm
sprt}(\xi)\subseteq[-1,1]$ and $\xi(0)>0$. Then, for $k\in\N$, we define
$\omega_k:\mathbf{O}(n)\to [0,\infty)$ by
$$
\omega_k(\rho):=c_k\cdot \xi(k^2\cdot \|\rho-{\rm id}\|^2)\,,
$$
where $\mathbf{O}(n)$ is the group of rotations of $\R^n$, endowed with
the Haar probability measure $\nu$, ``${\rm id}$'' is the identity element in
${\mathbf{O}(n)}$ and $c_k$ is chosen such that
$$
\int_{\mathbf{O}(n)}\omega_k(\rho)\, \nu(d\rho)=1.
$$
As a composition of $C^\infty$ maps, $\omega_k$ is of class $C^\infty$.
The following lemma is standard.
\begin{lema}
\label{l:regularization} Let $f\in\c(\s^{n-1})$. Then, for $k\in\N$, the
function $f_k:\s^{n-1}\to\R$ defined by
$$
f_k(u):=\int_{\mathbf{O}(n)}f(\rho u)\, \omega_k(\rho)\, \nu(d \rho)\,,\qquad
u\in \s^{n-1}\,,
$$
is of class $C^\infty (\sfe)$, and the sequence $(f_k)_{k\in\N}$ converges
to $f$ uniformly on $\s^{n-1}$.
\end{lema}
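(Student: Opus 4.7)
The plan is to establish the two assertions separately. Uniform convergence will follow from the uniform continuity of $f$ together with the fact that $\omega_k$ is a probability density supported in a shrinking neighborhood of the identity. Smoothness of $f_k$ will be obtained by the standard device of transferring the $u$-dependence from the merely continuous function $f$ onto the smooth weight $\omega_k$ via the right invariance of the Haar measure $\nu$.

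For the uniform convergence, note that by construction the support of $\omega_k$ is contained in $\{\rho\in\mathbf{O}(n):\|\rho-\mathrm{id}\|\le 1/k\}$ and $\int\omega_k\,d\nu=1$. Since $f$ is continuous on the compact sphere $\sfe$, it is uniformly continuous, so for any $\varepsilon>0$ there exists $\delta>0$ with $|f(v)-f(u)|<\varepsilon$ whenever $\|u-v\|<\delta$, $u,v\in\sfe$. For $k$ large enough that $1/k<\delta$ and all $\rho\in\mathrm{sprt}(\omega_k)$ we have $\|\rho u-u\|\le\|\rho-\mathrm{id}\|<\delta$ for every $u\in\sfe$, hence
$$
|f_k(u)-f(u)|\le\int_{\mathbf{O}(n)}|f(\rho u)-f(u)|\,\omega_k(\rho)\,\nu(d\rho)<\varepsilon
$$
uniformly in $u\in\sfe$.

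For the smoothness, fix an arbitrary $u_0\in\sfe$. Because the orbit map $\mathbf{O}(n)\to\sfe$, $\rho\mapsto\rho u_0$, is a smooth surjective submersion, there exists an open neighborhood $V$ of $u_0$ and a smooth local section $\sigma:V\to\mathbf{O}(n)$ with $\sigma(u)u_0=u$ for all $u\in V$. For $u\in V$, writing $\rho u=\rho\sigma(u)u_0$ and substituting $\tilde\rho=\rho\sigma(u)$, the right invariance of $\nu$ gives
$$
f_k(u)=\int_{\mathbf{O}(n)}f(\tilde\rho u_0)\,\omega_k(\tilde\rho\sigma(u)^{-1})\,\nu(d\tilde\rho).
$$
Now the $u$-dependence sits entirely inside the $C^\infty$ function $\omega_k$ composed with the smooth map $u\mapsto\sigma(u)^{-1}$. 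Since $\mathbf{O}(n)$ is compact, $f$ is bounded, and partial derivatives in $u$ of $\omega_k(\tilde\rho\sigma(u)^{-1})$ of any order are bounded uniformly in $\tilde\rho$, dominated convergence permits differentiation under the integral sign to arbitrary order. Hence $f_k\in C^\infty(V)$, and as $u_0$ was arbitrary, $f_k\in C^\infty(\sfe)$.

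The only mildly delicate step is the smoothness argument: one has to justify the existence of a smooth local section $\sigma$ and the legitimacy of differentiation under the integral. Both are standard, the former because $\mathbf{O}(n)\to\sfe$ is a principal bundle, and the latter because $\mathbf{O}(n)$ is compact and $\omega_k$ is smooth with bounded derivatives. The uniform convergence part presents no obstacle beyond invoking the uniform continuity of a continuous function on the compact sphere.
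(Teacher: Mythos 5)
Your proof is correct. The paper gives no argument for this lemma, remarking only that it is standard; your two-step treatment (uniform continuity of $f$ on the compact sphere together with the shrinking support $\{\|\rho-\mathrm{id}\|\le 1/k\}$ of the probability density $\omega_k$ for the convergence, and a smooth local section of the principal bundle $\mathbf{O}(n)\to\sfe$ combined with right invariance of the Haar measure to move the $u$-dependence into the smooth kernel for the differentiability) is exactly the standard argument the authors have in mind.
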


\medskip

\subsection{Covariant and Euclidean derivatives}
Let $\Omega$ be an open subset of $\sfe$. Then
the cone generated by the spherical set $\Omega$
is defined by
$ \hat\Omega:=\{tx\,:\,x\in\Omega\,,\, t>0\}$.
For $h\in C^2(\Omega)$, the 1-homogeneous extension $H$ of $h$ is
\begin{eqnarray*}
H(x)=\|x\|\,h\left(\dfrac{x}{\|x\|}\right)\,,\quad x\in\hat\Omega\,;
\end{eqnarray*}
in particular, we have $H\in C^2(\hat\Omega)$.
The next lemma allows us to express, for $u\in\sfe$, the eigenvalues of $Q(h,u)$
in terms of the eigenvalues
of the Hessian matrix $D^2H(u)$ of $H$ in $\R^n$.
The  $(n-1)\times (n-1)$
matrix $Q(h,u)$ involves second
covariant derivatives with respect to a local orthonormal frame of vector fields on $\sfe$. As remarked
earlier, the eigenvalues of  $Q(h,u)$ are independent of the choice of such a frame.
On the other hand, the Hessian matrix $D^2H(u)$ is an $n\times n$ matrix
whose entries  are the (Euclidean) second  partial derivatives $\partial_i\partial_j H(u)$, $i,j=1,\ldots,n$, of $H$
at $u$, determined with respect to a fixed orthonormal system $e_1,\ldots,e_n$ of $\R^n$. Although the Hessian matrix
depends on the choice of this basis, the $n$ eigenvalues of $D^2H(u)$ are independent of such a choice.
A discussion related to the following lemma is contained in \cite[\S 2.5, Lemma 2.5.1] {Schneider} and \cite[\S 3]{Howard}.

\begin{lema}\label{appendixlemma1}
Let $u_0\in\sfe$. If $\lambda_1,\dots,\lambda_{n-1}$ are the
eigenvalues of the matrix $Q(h,u_0)$, then the eigenvalues of the  matrix
$D^2H(u_0)$ are given by $\lambda_1,\dots,\lambda_{n-1},0$.
\end{lema}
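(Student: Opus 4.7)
The plan is to split $\R^n$ orthogonally at $u_0$ as $T_{u_0}\sfe \oplus \R u_0$, to show that the symmetric bilinear form $D^2H(u_0)$ respects this splitting with a trivial radial component, and then to identify the induced $(n-1)\times(n-1)$ tangential block with the matrix $Q(h,u_0)$. The eigenvalue statement follows at once.

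For the radial part I would invoke the $1$-homogeneity of $H$ on $\hat\Omega$. Euler's identity $\langle \nabla H(x), x\rangle = H(x)$, differentiated once in $x$, yields $D^2H(x)\,x = 0$ throughout $\hat\Omega$. Specialised at $u_0$, this shows that $u_0$ belongs to the kernel of $D^2H(u_0)$, so $0$ is one of its eigenvalues; by symmetry of $D^2H(u_0)$, the orthogonal complement $u_0^\perp = T_{u_0}\sfe$ is then invariant. Choosing any orthonormal basis $E_1,\dots,E_{n-1}$ of $T_{u_0}\sfe$ and completing it by $E_n := u_0$ produces an orthonormal basis of $\R^n$ in which $D^2H(u_0)$ is block diagonal, with vanishing last row and column and an $(n-1)\times(n-1)$ tangential block to be determined.

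The remaining task is to identify this tangential block with $Q(h,u_0)$. For $X \in T_{u_0}\sfe$, I would test $D^2H(u_0)$ along the spherical curve
\[
\tilde\gamma(t) := \frac{u_0 + tX}{\|u_0 + tX\|}, \qquad |t| \text{ small}.
\]
Since $X \perp u_0$, a direct calculation gives $\tilde\gamma'(0) = X$ and $\tilde\gamma''(0) = -|X|^2 u_0$, which is purely normal to $\sfe$. Hence the covariant acceleration of $\tilde\gamma$ at $t=0$ vanishes and $\tilde\gamma$ is admissible for computing the covariant Hessian via $(\nabla^2 h)_{u_0}(X,X) = (h\circ\tilde\gamma)''(0)$. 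Using $1$-homogeneity of $H$ to rewrite $h(\tilde\gamma(t)) = H(u_0+tX)/\|u_0+tX\|$ and expanding $p(t):=H(u_0+tX)$ (with $p''(0)=D^2H(u_0)(X,X)$) and $q(t):=\|u_0+tX\|=\sqrt{1+t^2|X|^2}$ (with $q'(0)=0$, $q''(0)=|X|^2$) to second order leads to the identity
\[
D^2H(u_0)(X,X) \;=\; (\nabla^2 h)_{u_0}(X,X) + h(u_0)|X|^2.
\]
Polarising and taking $X=E_i$, $Y=E_j$ shows that the tangential block equals $(h_{ij}(u_0) + h(u_0)\delta_{ij})_{i,j=1}^{n-1} = Q(h,u_0)$, and the lemma follows. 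The only (mild) subtlety is the use of the non-geodesic curve $\tilde\gamma$ in the Hessian computation: the normal component $-|X|^2 u_0$ of its acceleration encodes the second fundamental form of $\sfe \subset \R^n$ and is precisely what yields the additive term $h(u_0)|X|^2$ promoting $(\nabla^2 h)_{u_0}$ to $Q(h,u_0)$.
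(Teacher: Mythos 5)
Your proof is correct and follows the same strategy as the paper: use $1$-homogeneity to kill the radial direction, then compare the tangential Hessians at $u_0$. The two executions differ in flavor. The paper picks coordinates with $u_0=(0,\dots,0,1)$, introduces the local parametrization $\hat h(x')=h(x',\sqrt{1-\|x'\|^2})$ (i.e.\ normal coordinates at $u_0$, so $h_{ij}(u_0)=\partial_i\partial_j\hat h(o)$), rewrites $H(x',1)=\sqrt{1+\|x'\|^2}\,\hat h\bigl(x'/\sqrt{1+\|x'\|^2}\bigr)$, and differentiates twice explicitly. You instead differentiate Euler's identity to get $D^2H(x)\,x=0$ (the paper just states the kernel fact ``by homogeneity''), and then, rather than working in chart coordinates, test along the normalized ray $\tilde\gamma(t)=(u_0+tX)/\|u_0+tX\|$. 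The key observations that $\tilde\gamma''(0)=-|X|^2 u_0$ is purely normal (so the covariant acceleration vanishes and $(\nabla^2 h)_{u_0}(X,X)=(h\circ\tilde\gamma)''(0)$) and that $h\circ\tilde\gamma=p/q$ with $p(t)=H(u_0+tX)$, $q(t)=\|u_0+tX\|$ turn the entire comparison into a one-line Taylor expansion of a quotient; polarization then recovers the matrix identity. This is the same computation in disguise (your curve $\tilde\gamma$ is exactly the paper's rescaling of the chart), but your version is coordinate-free, makes the role of the sphere's second fundamental form explicit, and supplies the ``explicit computation'' that the paper leaves to the reader.
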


\begin{proof} We choose a coordinate system such that $u_0=(0,\dots,0,1)$.
Our first observation is that, by homogeneity, $u_0$ is an eigenvector of
$D^2H(u_0)$, with corresponding eigenvalue $0$. Hence it will be sufficient
to prove that
$$
\partial_i\partial_j H(u_0)
=h_{ij}(u_0)+h(u_0)\delta_{ij}\,,\quad\,
i,j=1,\dots,n-1\,.
$$
On the left-hand side, we consider the (Euclidean) second partial derivatives with respect
to an orthonormal basis $e_1,\ldots,e_n$ with $e_n=u_0$, on the right-hand side,
we consider the covariant derivatives with respect to a local orthonormal frame
which equals $\{e_1,\ldots,e_{n-1}\}$ at $u_0$.
We will write a point $x\in\R^n$ in the form $x=(x',y)$ with $x'\in\R^{n-1}$ and $y\in\R$. Clearly, in this notation
$\|x'\|$ denotes the norm in $\R^{n-1}$. Let
$$
D=\{x'\in\R^{n-1}\,:\,\|x'\|\le1\}\,,
$$
and define the function
$$
\hat h\,:\,D\to\R\,,\quad \hat h(x')=h(x',\sqrt{1-\|x'\|^2})\,.
$$
The second covariant derivatives of $h$ at $u_0$ can be computed through the (Euclidean) second
partial derivatives of $\hat h$ at $o$, that is
\begin{equation}\label{appendix1}
h_{ij}(u_0)=\partial_i\partial_j\hat h(o)\,,\quad\, i,j=1,\dots,n-1\,,
\end{equation}
since we are using  normal coordinates at $u$. 
On the other hand, by the definition of $H$ we have
\begin{eqnarray*}
H(x',1)&=&\sqrt{1+\|x'\|^2}\,h\left(
\frac{x'}{\sqrt{1+\|x'\|^2}},
\frac{1}{\sqrt{1+\|x'\|^2}}
\right)\\
&=&\sqrt{1+\|x'\|^2}\,\hat h\left(
\frac{x'}{\sqrt{1+\|x'\|^2}}
\right)\,,
\quad x'\in \R^{n-1}\,.
\end{eqnarray*}
Hence, for $i,j=1,\dots,n-1$,
$$
\partial_i\partial_j H(u_0)=
\frac{\partial^2}{\partial x'_i\partial x'_j}
\left.
\left(
\sqrt{1+\|x'\|^2}\,\hat h\left(
\frac{x'}{\sqrt{1+\|x'\|^2}}
\right)
\right)
\right|_{x'=o}\,.
$$
The proof is completed by an explicit computation of the derivative on the right hand--side of
the last equality, and by using (\ref{appendix1}).
\end{proof}

The next two results follow from Lemma \ref{appendixlemma1} and the homogeneity of $H$.

\begin{cor}\label{appendixcor1}
In the notation of Lemma \ref{appendixlemma1}, assume moreover that $\hat\Omega$ is convex. Then
$H$ is convex in $\hat\Omega$ if and only if the matrix $Q(h,u)$ is
positive semi-definite for every $u\in\Omega$.
\end{cor}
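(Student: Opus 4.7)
The plan is to reduce the convexity of $H$ on $\hat\Omega$ to a pointwise condition on the Euclidean Hessian $D^2H$ via the standard second-derivative test, then translate this into a condition on $Q(h,u)$ using Lemma~\ref{appendixlemma1}.

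First, since $\Omega\subseteq\sfe$ is open (the underlying assumption on the domain of $h\in C^2(\Omega)$) and $\hat\Omega$ is by hypothesis a convex open subset of $\R^n\setminus\{o\}$, with $H\in C^2(\hat\Omega)$, the classical second-order characterization of convexity applies: $H$ is convex on $\hat\Omega$ if and only if the Hessian $D^2H(x)$ is positive semi-definite for every $x\in\hat\Omega$.

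Next, I use 1-homogeneity to restrict attention to $\Omega$. Since $H$ is positively $1$-homogeneous, the gradient $\nabla H$ is $0$-homogeneous, and $D^2H$ is $(-1)$-homogeneous, i.e., $D^2H(tx)=t^{-1}D^2H(x)$ for every $t>0$ and $x\in\hat\Omega$. Consequently, $D^2H(x)$ is positive semi-definite if and only if $D^2H(x/\|x\|)$ is positive semi-definite, and the pointwise condition on all of $\hat\Omega$ is equivalent to the pointwise condition on $\Omega$.

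Finally, Lemma~\ref{appendixlemma1} tells us that for each $u\in\Omega$ the eigenvalues of $D^2H(u)$ are exactly $\lambda_1(u),\dots,\lambda_{n-1}(u),0$, where $\lambda_1(u),\dots,\lambda_{n-1}(u)$ are the eigenvalues of $Q(h,u)$. Since the extra eigenvalue $0$ is harmless for semi-definiteness, $D^2H(u)$ is positive semi-definite if and only if every $\lambda_i(u)\ge 0$, i.e., if and only if $Q(h,u)$ is positive semi-definite. Chaining the three equivalences yields the claim.

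I expect no real obstacle here: the proof is essentially a packaging of Lemma~\ref{appendixlemma1} with the standard convexity criterion, the only minor point being to invoke the homogeneity of $D^2H$ to pass between $\hat\Omega$ and $\Omega$, and to note that the convexity and openness of $\hat\Omega$ (given in the hypothesis) are what allow the second-derivative characterization to be applied in the first place.
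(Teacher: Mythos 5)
Your proof is correct and is exactly the argument the paper intends when it says the corollary ``follows from Lemma~\ref{appendixlemma1} and the homogeneity of $H$'': you apply the second-derivative criterion for convexity on the open convex set $\hat\Omega$, use that $D^2H$ is $(-1)$-homogeneous to reduce the pointwise positive semi-definiteness to points of $\Omega$, and then invoke Lemma~\ref{appendixlemma1} to identify the eigenvalues of $D^2H(u)$ with those of $Q(h,u)$ together with a harmless $0$.
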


\begin{cor}\label{appendixcor2}
In the notation of Lemma \ref{appendixlemma1}, assume moreover that $\hat\Omega$ is connected.
If the matrix $Q(h,u)$ is the zero matrix for
every $u\in\Omega$, then $H$ is linear on $\hat\Omega$.
\end{cor}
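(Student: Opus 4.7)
The plan is to convert the hypothesis on $Q(h,\cdot)$ into vanishing of the Euclidean Hessian $D^2H$ on the cone $\hat\Omega$, then invoke the classical fact that a $C^2$ function on a connected open set with identically vanishing Hessian is affine, and finally use homogeneity to force the affine function to be linear.

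First, I would apply Lemma \ref{appendixlemma1} pointwise. If $Q(h,u)=0$ for some $u\in\Omega$, then its eigenvalues $\lambda_1,\dots,\lambda_{n-1}$ all vanish, so by Lemma \ref{appendixlemma1} the eigenvalues of $D^2H(u)$ are $0,\dots,0$. Since $D^2H(u)$ is symmetric, this forces $D^2H(u)=0$ for every $u\in\Omega$. Next I would extend this to all of $\hat\Omega$ by homogeneity: because $H$ is $1$-homogeneous on $\hat\Omega$, each partial derivative $\partial_i H$ is $0$-homogeneous, hence $\partial_i\partial_j H$ is $(-1)$-homogeneous, so $D^2H(tu)=t^{-1}D^2H(u)=0$ for all $u\in\Omega$ and $t>0$. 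Every point of $\hat\Omega$ has this form, so $D^2H\equiv 0$ on $\hat\Omega$.

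Third, since $\hat\Omega$ is an open connected subset of $\R^n$ on which $H\in C^2$ has vanishing Hessian, a standard argument (integrating $\nabla H$ along piecewise linear paths in $\hat\Omega$) shows that $\nabla H$ is constant and hence $H(x)=\langle a,x\rangle+b$ for some $a\in\R^n$ and $b\in\R$. Finally, fixing any $x\in\hat\Omega$, the $1$-homogeneity relation $H(tx)=tH(x)$ for $t>0$ (with $tx\in\hat\Omega$) forces $\langle a,tx\rangle+b=t\langle a,x\rangle+tb$, i.e.~$b=tb$ for all $t>0$, so $b=0$ and $H(x)=\langle a,x\rangle$ is linear on $\hat\Omega$.

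There is no substantial obstacle here: the statement is essentially a bookkeeping consequence of Lemma \ref{appendixlemma1} combined with elementary calculus on a connected open set. The only point that needs a moment of care is to confirm that connectedness (not just path-connectedness within $\Omega$) is the right hypothesis to conclude the affine representation on the \emph{cone} $\hat\Omega$; this is immediate because $\hat\Omega$ is open in $\R^n$, so connectedness implies path-connectedness via piecewise linear paths, which is all that is needed to integrate the vanishing Hessian.
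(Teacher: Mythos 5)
Your proof is correct and follows exactly the route the paper indicates (it gives no explicit proof, only the remark that the corollary follows from Lemma \ref{appendixlemma1} together with the homogeneity of $H$, which is precisely what you carry out). The steps — vanishing of all eigenvalues of $D^2H$ on $\Omega$, propagation to $\hat\Omega$ by $(-1)$-homogeneity of second derivatives, the standard affine conclusion on a connected open set, and elimination of the constant term by $1$-homogeneity — are all sound.
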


\medskip

The following result allows us to build a family of perturbations of a convex body having a portion
of the boundary of class $C^2$ with positive Gauss curvature.

\begin{prop}\label{appendixprop1} Let $K\in\K^n$ and let $h$ be its support function.
Assume that there exists an open subset $\Omega$ of $\sfe$ such that $h\in C^2(\Omega)$ and
$Q(h,u)>0$ for every $u\in\Omega$. Let $\phi\in C^2(\sfe)$ and assume that the support of $\phi$
is contained in $\Omega$. Then there exists some $\epsilon>0$ such that the function $h_s:=h+s\phi$
is the support function of a convex body, for every $s\in[-\epsilon,\epsilon]$. In particular, if $\Omega=\sfe$,
then $h_s$ is the support function of a convex body of class $C^2_+$ if $\epsilon>0$ is sufficiently small.
\end{prop}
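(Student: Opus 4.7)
I will show that for $|s|$ sufficiently small, the $1$-homogeneous extension $H_s:\R^n\to\R$ of $h_s := h + s\phi$ is convex on $\R^n$, which by the characterization of support functions recalled in Section~\ref{sII.1} yields the proposition. The strategy is to establish local convexity of $H_s$ on $\R^n\setminus\{o\}$ by splitting $\sfe$ into the compact set $N := \supp(\phi)\subset\Omega$ and its complement, and then to upgrade to global convexity via the Tietze-type argument already invoked in Section~\ref{dim3dss4}.

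First I fix an open set $U$ with $N\subset U\subset\overline U\subset\Omega$ and $\overline U$ compact. On $\overline U$ the continuous matrix-valued function $Q(h,\cdot)$ is positive definite, so by compactness its smallest eigenvalue admits a positive lower bound $\lambda_0>0$; similarly, $Q(\phi,\cdot)$ is continuous on $\sfe$ and hence uniformly bounded in operator norm by some $M>0$. Choosing $\epsilon\in(0,\lambda_0/M)$, for every $|s|\le \epsilon$ the matrix $Q(h_s,u)=Q(h,u)+sQ(\phi,u)$ remains positive definite on $\overline U$.

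Next I verify local convexity of $H_s$ at each $x\in\R^n\setminus\{o\}$ by examining $u_0:=x/\|x\|\in\sfe$. If $u_0\in N$, I pick a spherically convex open neighborhood $V\subset U$ of $u_0$ such that the cone $\hat V$ is convex; then $h_s\in C^2(V)$ and $Q(h_s,\cdot)>0$ on $V$, so Corollary~\ref{appendixcor1} applied to $h_s$ on $\hat V$ shows that $H_s$ is convex on $\hat V$. If instead $u_0\notin N$, then since $\sfe\setminus N$ is open I can pick a spherically convex open neighborhood $V$ of $u_0$ disjoint from $N$ with $\hat V$ convex; on $V$ we have $\phi\equiv 0$, so $H_s\equiv H$ on $\hat V$, which is convex because $H$ is the $1$-homogeneous extension of a support function.

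Thus $H_s$ is continuous on $\R^n$ and locally convex on $\R^n\setminus\{o\}$. By the Tietze-type result invoked in Section~\ref{dim3dss4} (cf.\ \cite[Theorem 2]{SSV}), $H_s$ is convex on every convex subset of $\R^n\setminus\{o\}$. Combined with continuity at $o$ and $1$-homogeneity, this upgrades to convexity on all of $\R^n$: the delicate case of a segment through $o$ is handled by approximating with a parallel segment avoiding $o$ and passing to the limit using continuity of $H_s$. Hence $h_s$ is a support function. The final ``in particular'' statement follows by specializing to $\Omega=\sfe$, where $\overline U=\sfe$ and the same uniform estimates give $Q(h_s,\cdot)>0$ on all of $\sfe$, placing $h_s$ in the class $\mathfrak S$ of $C^2_+$ support functions. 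The main technical obstacle lies in the passage from local convexity on $\R^n\setminus\{o\}$ to global convexity on $\R^n$ across the singularity at the origin, which is precisely what the Tietze machinery and a brief approximation argument resolve.
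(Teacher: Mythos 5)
Your proof is correct and follows essentially the same route as the paper: a compactness argument to get a uniform eigenvalue bound and hence $Q(h_s,\cdot)>0$ near the support of $\phi$, Corollary~\ref{appendixcor1} to pass to local convexity of the $1$-homogeneous extension, the fact that $H_s=H$ away from $\supp(\phi)$, and the Tietze-type theorem to upgrade local convexity to convexity on $\R^n\setminus\{o\}$. The only cosmetic differences are that you split cases by $u_0\in\supp(\phi)$ versus $u_0\notin\supp(\phi)$ (rather than $u\in\Omega$ versus $u\notin\Omega$ as in the paper) and that you spell out the final approximation step for segments through the origin, which the paper leaves implicit.
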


\begin{proof} Let $D$ be the support of $\phi$, hence $D$ is a compact subset of $\Omega$. As
$Q(h,u)>0$ for every $u\in D$, by compactness there exists $\gamma>0$ such that
$Q(h,u)\ge\gamma I_{n-1}$, where $I_{n-1}$ is the identity matrix in ${\mathcal S}_{n-1}$. Hence
there exists $\epsilon>0$ such that $Q(h_s,u)>0$ for every $u$ in $D$, and consequently in $\Omega$,
and for every $s\in[-\epsilon, \epsilon]$.

Let $H$ and $H_s$ be the 1-homogeneous extensions of $h$ and $h_s$ respectively. We know that $H$ is convex and we
want to prove that $H_s$ is convex as well, for $s\in[-\epsilon, \epsilon]$. Let $x\in\R^n$, $x\ne0$, and let
$u=\frac{x}{\|x\|}$. Assume that $u\in\Omega$ and let ${\mathcal U}$ be a neighborhood of $u$ contained in
$\Omega$ so that $\hat{\mathcal{U}}$ is convex. Then $h_s\in C^2({\mathcal U})$ and, by the previous part of the proof, $Q(h_s,u)>0$ for every
$u\in{\mathcal U}$. Consequently, by Corollary \ref{appendixcor1}, $H_s$ is convex in $\hat{\mathcal U}$ and in
particular it is convex in a neighborhood of $x$. Assume now that $u\notin\Omega$; then there exists
a neighborhood ${\mathcal U}$ of $u$ contained in $\sfe\setminus D$. Then $h_s=h$ in $\mathcal U$ and
consequently $H_s=H$ in $\hat{\mathcal U}$. This proves that $H_s$ is convex in a neighborhood of $x$.
Thus we have shown that $H_s$ is locally convex in $\R^n\setminus\{o\}$. By
a classical result due to Tietze (see \cite{SSV}), applied to the epigraph of $H_s$, this yields that $H_s$ is convex in every
convex subset of $\R^n\setminus\{o\}$. From this the convexity in $\R^n$ is easily obtained.
\end{proof}

\medskip

Let $f\in C^2(A)$, where $A$ is an open subset of $\R^n$ and let $\rho\in{\bf O}(n)$. We denote by
$f_\rho$ the function defined on $\rho(A)$ as the composition of $f$ and $\rho^{-1}$, that is
$$
f_\rho\,:\,\rho(A)\to\R\,,\quad f_\rho(x)=f(\rho^{-1}(x))\,.
$$
Obviously, $f_\rho\in C^2(\rho(A))$.
The chain rule and elementary linear algebra show that $D^2f_\rho (x)$ and $D^2f (\rho^{-1} (x))$
are described by similar (symmetric) matrices with respect to a fixed orthonormal basis, and
hence they have the
same real eigenvalues at $x\in\rho(A)$. Therefore, in particular, we have
$$
\text{trace}( D^2 f_\rho)(x)=\text{trace}( D^2 f)(\rho^{-1}(x))\,,\quad \,x\in\rho(A)\,,
$$
and
$$
\det(D^2 f_\rho(x))=\det(D^2 f(\rho^{-1}(x)))\,,\quad\,x\in\rho(A)\,.
$$
The following result provides similar relations for functions on the sphere.
Let
$\Omega $ be an open subset of $\sfe$, let $\psi\in C^2(\Omega)$, and, for $\rho\in{\bf O}(n)$,
denote by $\psi_\rho$ the function defined on $\rho(\Omega)$
by
$$
\psi_\rho\,:\,\rho(\Omega)\to\R\,,\quad\psi_\rho(x)=\psi(\rho^{-1}(x))\,.
$$

\begin{lema}\label{appendixlemma2} Using the preceding notation, we have
\begin{equation}\label{appendix5}
{\rm trace}(Q(\psi_\rho,x))=
{\rm trace}(Q(\psi,\rho^{-1}(x)))\,,
\quad\,x\in\rho(\Omega)\,;
\end{equation}
\begin{equation}\label{appendix6}
\det(Q(\psi_\rho,x)=\det(Q(\psi, \rho^{-1}(x)))\,,\quad\,x\in\rho(\Omega)\,.
\end{equation}
\end{lema}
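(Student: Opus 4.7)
The plan is to reduce the assertion about second covariant derivatives on the sphere to a statement about ordinary Hessians in $\R^n$ via the 1-homogeneous extension trick used in Lemma \ref{appendixlemma1}, exploiting the fact that rotations are orthogonal linear maps.

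First I would introduce $\Psi$, the 1-homogeneous extension of $\psi$ from $\Omega$ to the cone $\hat\Omega$, and similarly $\Psi_\rho$, the 1-homogeneous extension of $\psi_\rho$ from $\rho(\Omega)$ to $\rho(\hat\Omega)=\widehat{\rho(\Omega)}$. A direct check from the definition gives
\[
\Psi_\rho(x)=\|x\|\,\psi_\rho\!\left(\tfrac{x}{\|x\|}\right)=\|x\|\,\psi\!\left(\rho^{-1}\tfrac{x}{\|x\|}\right)=\|\rho^{-1}x\|\,\psi\!\left(\tfrac{\rho^{-1}x}{\|\rho^{-1}x\|}\right)=\Psi(\rho^{-1}x),
\]
for every $x\in\widehat{\rho(\Omega)}$, since $\rho\in\mathbf{O}(n)$ preserves norms. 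So $\Psi_\rho=\Psi\circ\rho^{-1}$ on $\widehat{\rho(\Omega)}$.

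Next I would apply the chain rule twice. With $\rho$ identified with its matrix in a fixed orthonormal basis and noting $(\rho^{-1})^{T}=\rho$, I obtain
\[
D^2\Psi_\rho(x)=\rho\,D^2\Psi(\rho^{-1}x)\,\rho^{T},\qquad x\in\widehat{\rho(\Omega)}.
\]
Because $\rho$ is orthogonal, this is a similarity via an orthogonal transformation, so the two Hessian matrices have the same $n$ real eigenvalues (with multiplicities).

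Then I would invoke Lemma \ref{appendixlemma1}. For $u\in\Omega$ it asserts that the $n-1$ eigenvalues of $Q(\psi,u)$ are exactly the eigenvalues of $D^2\Psi(u)$ other than the zero eigenvalue produced by 1-homogeneity in the radial direction $u$; and analogously at $x=\rho(u)\in\rho(\Omega)$ for the pair $Q(\psi_\rho,x)$ and $D^2\Psi_\rho(x)$. (I should verify consistency: by Euler, $D^2\Psi(\rho^{-1}x)(\rho^{-1}x)=0$, and multiplying the displayed similarity by $x$ shows $D^2\Psi_\rho(x)x=0$, so the radial zero eigenvalue is correctly matched on both sides.) Combining with the eigenvalue identity above, the eigenvalues of $Q(\psi_\rho,x)$ and $Q(\psi,\rho^{-1}x)$ coincide with multiplicities. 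Since trace and determinant of a matrix are the elementary symmetric functions of its eigenvalues, both \eqref{appendix5} and \eqref{appendix6} follow at once.

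The only step that requires any care is the bookkeeping in the application of Lemma \ref{appendixlemma1}, namely making sure that the zero eigenvalue of $D^2\Psi_\rho(x)$ that we discard is precisely the one associated with the radial direction at $x$, not some incidental extra zero; but this is immediate from the computation $D^2\Psi_\rho(x)x=\rho\,D^2\Psi(\rho^{-1}x)(\rho^{-1}x)=0$ above. Everything else is routine linear algebra and the chain rule, so no additional obstacle is expected.
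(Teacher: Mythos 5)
Your proof is correct and follows essentially the same route as the paper's: identify $\Psi_\rho=\Psi\circ\rho^{-1}$, note that $D^2\Psi_\rho(x)$ and $D^2\Psi(\rho^{-1}x)$ are orthogonally similar and hence share eigenvalues, and transfer to $Q$ via Lemma \ref{appendixlemma1}. The extra check that the radial zero eigenvalue matches on both sides is harmless but not needed, since Lemma \ref{appendixlemma1} already equates the eigenvalue multisets (with the additional zero) without requiring one to single out a particular eigenvector.
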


\begin{proof} Let $\hat\Omega$ be the cone generated by $\Omega$, and let $\Psi$ be the
$1$-homogeneous extension of $\psi$ to $\hat\Omega$. Clearly, $\Psi_\rho=\Psi\circ\rho^{-1}$ is equal to the
$1$-homogeneous extension of $\psi_\rho$. Let $0,r_1,\ldots,r_{n-1}$ denote the common eigenvalues of
$D^2\Psi_\rho(x)$ and $D^2\Psi(\rho^{-1}(x))$, where $x\in\rho(\Omega)$. By Lemma \ref{appendixlemma1}, applied to $\Psi_\rho$
as the 1-homogeneous extension of $\psi_\rho$ at $x$,
it follows that $Q(\psi_\rho,x)$ has the eigenvalues $r_1,\ldots,r_{n-1}$. In the same way Lemma \ref{appendixlemma1}, applied to $\Psi$ as the 1-homogeneous extension of $\psi$ at $\rho^{-1}(x)$,
shows that $Q(\psi,\rho^{-1}(x))$ has the eigenvalues $r_1,\ldots,r_{n-1}$. Now
 (\ref{appendix5}) and  \eqref{appendix6} follow immediately.
\end{proof}

\end{document}